\documentclass[a4paper]{amsart}

\usepackage[lmargin=1in,rmargin=1in,tmargin=1in,bmargin=1in]{geometry}
\RequirePackage{amsmath} 
\RequirePackage{amssymb}
\usepackage{amscd,latexsym,amsthm,amsfonts,amssymb,amsmath,amsxtra}
\usepackage[colorlinks=true,urlcolor=blue,citecolor=blue]{hyperref}
\usepackage{color}
\usepackage[all]{xy}
\usepackage[OT2,T1]{fontenc}
\usepackage{bm}
\usepackage{mathtools}
\usepackage{mathrsfs }
\usepackage{xcolor}
\usepackage{comment}
\usepackage{marginnote}
\usepackage{enumitem}
\usepackage{thmtools, thm-restate}

\DeclareSymbolFont{cyrletters}{OT2}{wncyr}{m}{n}
\DeclareMathSymbol{\Sha}{\mathalpha}{cyrletters}{"58}

\let\Re\undefined

\DeclareMathOperator{\Re}{Re}

\newcommand{\bZ}{\mathbb{Z}}

\newcommand{\bQ}{\mathbb{Q}}

\newcommand{\cA}{\mathcal{A}}
\newcommand{\bR}{\mathbb{R}}

\newcommand{\cH}{\mathcal{H}}

\newcommand{\cD}{\mathcal{D}}

\newcommand{\p}{\partial}

\newcommand{\cS}{\mathcal{S}}
\newcommand{\cB}{\mathcal{B}}

\newcommand{\cL}{\mathcal{L}}

\newcommand{\cG}{\mathcal{G}}

\newcommand{\cM}{\mathcal{M}}

\newcommand{\fS}{\mathfrak{S}}

\newcommand{\psum}{\sideset{}{^*}\sum}
\newcommand{\dsum}{\sideset{}{^d}\sum}

\newcommand{\ssum}{\sideset{}{^{\sharp}}\sum}

\def\Re{\operatorname{Re}}

	\newcommand{\Mod}[1]{\ (\mathrm{mod}\ #1)}
        \newcommand{\sym}{\operatorname{sym}}

	\newcommand{\Res}{\operatorname{Res}}

	\newcommand{\RNum}[1]{\uppercase\expandafter{\romannumeral #1\relax}}

\begin{document}
\theoremstyle{plain}
	\newtheorem{thm}{Theorem}[section]
	\newtheorem{cor}[thm]{Corollary}
	\newtheorem{thmy}{Theorem}
        \newtheorem{cory}{Corollary}
	\renewcommand{\thethmy}{\Alph{thmy}}
	\newenvironment{thmx}{\stepcounter{thm}\begin{thmy}}{\end{thmy}}
        \newenvironment{corx}{\stepcounter{cor}\begin{cory}}{\end{cory}}
        
\newtheorem{prob}[thm]{Problem}

\newtheorem{problemalt}{Problem}[thm]        
        
\newenvironment{probp}[1]{
  \renewcommand\theproblemalt{#1}
  \problemalt
}{\endproblemalt}

	\renewcommand{\thecory}{\Alph{cory}}
	\newtheorem{hy}[thm]{Hypothesis}
	\newtheorem*{thma}{Theorem A}
	\newtheorem*{corb}{Corollary B}
	\newtheorem*{thmc}{Theorem C}
        \newtheorem*{thmd}{Theorem D}
	\newtheorem{lemma}[thm]{Lemma}  
	\newtheorem{prop}[thm]{Proposition}
	\newtheorem{conj}[thm]{Conjecture}
	\newtheorem{fact}[thm]{Fact}
	\newtheorem{claim}[thm]{Claim}
	\theoremstyle{definition}
	\newtheorem{defn}[thm]{Definition}
	\newtheorem{example}[thm]{Example}
	\theoremstyle{remark}
	
	\newtheorem{remark}[thm]{Remark}	
	\numberwithin{equation}{section}

\title[The first moment]{The first moment}%
\author{}

\title[]{First Moment of derivatives of $L$-functions in a nonlinear family}%
\author{Tinghao Huang and Zhining Wei}

\address{Mathematics Tower, 231 18th Ave, Columbus, OH 43210 USA}
\email{huang.4939@osu.edu}

\address{School of Mathematics and Statistics, Xi'an Jiaotong University
Xi'an 710049, Shaanxi, People's Republic of China}
\email{wei.863@xjtu.edu.cn}

\maketitle

\begin{abstract}
    In this paper, we prove an asymptotic formula for the moment of the first order derivative of modular $L$-functions at the center of the critical strip, weighted by generalized divisor functions formed by primitive quadratic characters. Such moments were previously studied by Munshi \cite{compMunshiMR2771124,transactionMunshiMR2792990}, which naturally arise in the study of elliptic fibration. To the best of our knowledge, such asymptotic formulae have only been proven in the setting of higher-order derivatives, or under the specialization to dihedral forms.
\end{abstract}


\section{Introduction}
Moments of $L$-functions are among the central objects of study in analytic number theory. Broadly speaking, they concern the asymptotic behavior of sums of $L$-functions over suitable families. Moments encode fundamental information about the distribution of values of $L$-functions and have far-reaching applications to a variety of important problems in arithmetic geometry (see, for example, \cite{BSD-IMR146143, BSD-IIMR179168}) and the theory of automorphic representations (see, for example, \cite{Gan-Gross-PrasadMR3202556}).

The moments of $L$-functions have been extensively studied over many natural families. Among these, the family of quadratic twists of $L$-functions associated with holomorphic cusp forms is of particular interest. Besides its intrinsic importance, this family has significant applications to the study of quadratic twists of elliptic curves and to the Fourier coefficients of half-integral weight modular forms through the Waldspurger--Kohnen--Zagier formula (\cite{WaldspurgerMR646366, Kohnen-ZagierMR629468}).

The first moment of this family has been studied extensively. Let $f$ be a holomorphic Hecke eigenform. The first moment of the
central values of the quadratic twist $L$-functions associated with $f$ is given by 
\[
\sum_{d} L\!\left(\frac12,f\times\chi_d\right)
F\!\left(\frac{d}{X}\right),
\]
where $X>0$ and $F\in C_c^\infty(\mathbb{R})$. One may also consider the
corresponding first moment of the central derivatives,
\[
\sum_{d} L'\!\left(\frac12,f\times\chi_d\right)
F\!\left(\frac{d}{X}\right).
\]
In both sums, $d$ ranges over appropriate subsets of the fundamental
discriminants.

Through the work of many authors, the asymptotic behavior of these first moments
as $X\to\infty$ is now well understood; see, for example,
\cite{GoldfeldHoffsteinPatterson1982,BumpFriedbergHoffstein1990,
MurtyMurty1991,IwaniecMR1081731}. In both cases, one obtains a non-vanishing
main term of order $X$, together with an error term of size
$O(X^\delta)$ for some $0<\delta<1$.
For the first moment of the central values, the error term has been
successively improved; see, for example,
\cite{IwaniecMR1081731,LuoRamakrishnan1997,Munshi2009, PetrowMR3180602,Shen-MR4528422}.

Although precise conjectures for higher moments of quadratic twist
$L$-functions have been formulated (see
\cite{ConreyFarmerKeatingRubinsteinSnaith2005}), proving such asymptotic
formulae becomes substantially more difficult as the order of the moment
increases. A long-standing problem has been to establish asymptotic formulae
for the second moments
\[
\sum_d L\!\left(\frac12,f\times\chi_d\right)^2
F\!\left(\frac{d}{X}\right)
\]
and
\[
\sum_d L'\!\left(\frac12,f\times\chi_d\right)^2
F\!\left(\frac{d}{X}\right),
\]
where in each case $d$ ranges over a suitable subset of the fundamental
discriminants.

Asymptotic formulae for the second moments of both the central values and the
central derivatives were first established in
\cite{Young-SoundMR2677611} and \cite{PetrowMR3180602},
respectively, under the assumption of the Generalized Riemann Hypothesis (GRH)
for the family
\[
\left\{
L(s,f\times\chi_{d_0})
: d_0 \text{ is a fundamental discriminant}
\right\}.
\]
In the same paper, Petrow \cite{PetrowMR3180602} also proved, under GRH, an
asymptotic formula for the mixed first moment
of $L'\!\left(\frac12,f\times\chi_d\right)
L'\!\left(\frac12,g\times\chi_d\right)$,
where $g$ is a holomorphic Hecke eigenform distinct from $f$.

A major breakthrough was achieved by Li \cite{LiMR4768632}, who
unconditionally established an asymptotic formula for the second moment of
$L(\frac12,f\times\chi_d)$. His proof relies on essentially optimal estimates
for bilinear forms involving quadratic characters and Hecke eigenvalues.
Building on Li's ideas, Kumar \cite{KumarMR4765788} established the
corresponding unconditional asymptotic formula for the second moment of
$L'(\frac12,f\times\chi_d)$. More recently,
\cite{jiang2026second} refined this result by different methods.
Li's techniques have also led to further developments. Zhou
\cite{zhou2025momentderivativesquadratictwists} established an unconditional
asymptotic formula for the mixed first moment of
$L'\!\left(\frac12,f\times\chi_d\right)
L'\!\left(\frac12,g\times\chi_d\right)$,
and Huang \cite{Huang2025} subsequently obtained an unconditional asymptotic
formula for $L\!\left(\frac12,f\times\chi_d\right)
L'\!\left(\frac12,g\times\chi_d\right)$.

In this paper, we consider a family of quadratic twists of modular $L$-functions arising from quadratic fibration of elliptic curves over $\mathbb{Q}$.
 To state our result, we let $D<0$ be a fundamental discriminant satisfying $4|D$ and $\chi_D(\cdot) = \big(\frac{D}{\cdot}\big)$ the associated primitive quadratic character. For $m\in\bZ$, we define the weight function:
\[\psi(m,D):=\sum_{r|m}\chi_D(r).\] 
We further let $q$ be a squarefree integer satisfying $(q,6D)=1$ and $f$ a Hecke eigenform of weight $2$ and level $q.$ In addition, we assume that the global root number of $f$, denoted by $\omega_f$, is equal to $-1$. Finally, we denote by $\mathcal{D}$ the set of all integers
\[\mathcal{D}=\{d>0: \mbox{$d$ is a fundamental discriminant, $d\equiv1\Mod{4},$ $(d,qD)=1$ and $\chi_d(q)=1$}\}.\]
We consider the following first moment of the quadratic twists of central derivatives, weighted by $\psi(\cdot,D)$:
$$
\sum_{d \in \mathcal{D}}\psi(d,D)L'(1/2, f\times \chi_d)J\bigg(\frac{d}{X}\bigg).
$$
For $d \in \mathcal{D}$, we have
$$
\psi(d,D)=\sum_{r\mid d}\chi_D(r)
=\prod_{p\mid d}\bigl(1+\chi_D(p)\bigr).
$$
Hence, $\psi(d,D)$ is nonzero if and only if every prime dividing $d$ splits in the imaginary quadratic field $\mathbb{Q}(\sqrt{D})$. Consequently, the above summation is essentially supported on a sparse subset of fundamental discriminants.

Such quadratic twists arise naturally in the study of quadratic fibrations of elliptic curves over $\mathbb{Q}$. The above first moment was investigated by Munshi in \cite{compMunshiMR2771124,transactionMunshiMR2792990}. More precisely, in \cite{compMunshiMR2771124}, Munshi studied the first moment of the $l$-th derivatives of the completed $L$-functions in the above family (although his choice of $\mathcal{D}$ differs slightly from ours) for $D=-4$, and established asymptotic formulae as $X\to\infty$ for $l\ge 8$. In \cite{transactionMunshiMR2792990}, Munshi established the first moment of the first derivative of quadratic twisted modular $L$-functions, when the underlying modular form corresponds to a CM elliptic curve. We also remark that, in \cite{transactionMunshiMR2792990}, Munshi considered more complicated weight functions. 

In this paper we focus on the first order derivative and prove the following:
\begin{thm}\label{thm.main theorem}
Let $X>1$ and $J(x)$ a smooth function compactly supported in $(0,\infty).$ Assume the notations above. Then:
\[\sum_{d\in\cD}\psi(d,D)L^{\prime}(1/2,f\times\chi_d)J\left(\frac{d}{X}\right)=c_{q,D}\cG(q,D)\widetilde{J}(1)L(1,\chi_D)X\log X+O\left(X(\log X)^{\frac{1}{2}}(\log\log X)^3\right),
\]
	where $\widetilde{J}(s)$ is the Mellin transformation of $J(x),$ and $c_{q,D}$ (resp. $\cG(q,D)$) is defined in \eqref{eq.constant in euler} (resp. \eqref{eq. extra term}).
    In particular, $c_{q,D}\mathcal{G}(q,D) \neq 0$.
\end{thm}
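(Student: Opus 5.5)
We will start from the approximate functional equation for the central derivative. Since $\omega_f=-1$ and, for $d\in\cD$, $\chi_d(-q)=\chi_d(q)=1$ (as $d>0$), the twisted root number is $-1$. Hence
\[
L'(1/2,f\times\chi_d)=2\sum_{n\ge1}\frac{\lambda_f(n)\chi_d(n)}{\sqrt n}\,V\!\left(\frac{n}{d\sqrt{q}}\right),
\qquad
V(y)=\frac{1}{2\pi i}\int_{(2)}\frac{\Gamma(1+s)}{\Gamma(1)}(2\pi y)^{-s}\frac{ds}{s^2}.
\]
The double pole at $s=0$ is what will produce the factor $\log X$. We then insert $\psi(d,D)=\sum_{r\mid d}\chi_D(r)$ and write $d=rs$. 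For fundamental discriminants $d\equiv1\pmod 4$ coprime to $qD$, this means summing over squarefree odd $r,s$ with $(rs,qD)=1$, coprime to each other, subject to the congruence condition $rs\equiv1\pmod4$. We detect the condition $\chi_d(q)=1$ with $\frac12(1+\chi_d(q))$, and remove squarefreeness by Möbius inversion $\sum_{a^2\mid d}\mu(a)$, truncated at $a\le Y$. The tail $a>Y$ will be handled as in Soundararajan's method, using a large sieve for quadratic characters (Heath-Brown) to bound second moments. The expected result is a total of $O(X(\log X)^{\epsilon}/Y)$-type savings.

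Next we split according to the relative sizes of $r$ and $s$. The key point is that the weight $\psi(d,D)$ has mean value of size $L(1,\chi_D)$ on average: the divisor-type sum $\sum_{r\mid d}\chi_D(r)$ has average $\sum_r\chi_D(r)/r$. For the main term we would apply Poisson summation in the $d$-variable, or equivalently in the longer of $r,s$, with the shorter one fixed. For the sum over $d$ in an arithmetic progression, twisted by $\chi_d(n)$, the zero frequency survives only when $n$ is a square (times the appropriate factor coming from $q$). This gives
\[
\frac{X\widetilde J(1)}{\text{(const)}}\sum_{r}\frac{\chi_D(r)}{r}\sum_{m}\frac{\lambda_f(m^2)}{m}\,(\text{local factors})\,V\!\left(\frac{m^2}{X\sqrt q}\right).
\]
Shifting the contour in $V$ past the double pole at $s=0$ yields the main term $c_{q,D}\cG(q,D)L(1,\chi_D)\widetilde J(1)X\log X$. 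Here $L(s,\mathrm{sym}^2 f)$-type Euler products give $c_{q,D}$, and the $2$-adic and $q$-adic local factors give $\cG(q,D)$. The non-vanishing $c_{q,D}\cG(q,D)\ne0$ will then be checked factor by factor: $L(1,\mathrm{sym}^2f)\neq0$, each local factor is nonzero since $|\lambda_f(p)|\le 2\sqrt p$ stays away from the singular values, and the $2$-adic sum is computed explicitly.

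The main obstacle is the off-diagonal (nonzero Poisson frequencies), together with the range where $r$ and $s$ are both around $\sqrt X$. There the sum becomes a bilinear form in $\chi_r(s)\chi_D(r)$ twisted by $\lambda_f$, and no convexity-breaking for a single character is available. I would first try to bound the dual sums using the Heath-Brown large sieve for the nonzero frequencies. For the balanced range, I would use the multiplicative structure of $\psi$: the sum over $d$ with $\psi(d,D)\ne0$ is supported on integers all of whose primes split, a set of density about $(\log X)^{-1/2}$. A Rankin/Selberg–Delange type argument combined with Cauchy–Schwarz against the second moment of $L'(1/2,f\times\chi_d)$, which is $\ll X\log^3X$ by Kumar's result or by the large sieve, then gives a contribution of size $X(\log X)^{1/2}$ up to $\log\log$ factors. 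This matches the stated error term $O(X(\log X)^{1/2}(\log\log X)^3)$, with the $(\log\log X)^3$ arising from the dyadic decompositions in the truncation parameters $Y$ and the ranges of $r$. Balancing these truncations against the main-term extraction is where I expect most of the technical difficulty to lie.
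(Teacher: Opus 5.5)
\textbf{There is a genuine gap.} It lies in the step meant to control the range you yourself flag as the main obstacle, and the proposed fix fails quantitatively. Cauchy--Schwarz against the second moment of $L'(1/2,f\times\chi_d)$ cannot produce $X(\log X)^{1/2}$.

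\begin{itemize}
\item The sparsity of the support of $\psi(\cdot,D)$ does not help. On that support $\psi(d,D)=2^{\omega(d)}$, and $\sum_d\psi(d,D)^2J(d/X)\asymp X\log X$, since its Dirichlet series is $\zeta(s)^2L(s,\chi_D)^2\cH(s;D)$ with a double pole at $s=1$.
\item The second moment satisfies $\sum_d|L'(1/2,f\times\chi_d)|^2J(d/X)\asymp X\log^3X$.
\item The product of the square roots is therefore $X\log^2X$. This is a full $\log X$ larger than the main term, and nothing in your outline recovers the missing $(\log X)^{3/2}$.
\end{itemize}

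The underlying obstruction is the one Munshi identified. For $n\asymp X$ the $d$-sum behaves like $L(s,\chi_{nq'})L(s,\chi_{Dnq'})$. Any Poisson or functional-equation dualization returns a sum of the same length, whether in $d$, or in $s$ or $r$ when $r\asymp s\asymp\sqrt X$. So the off-diagonal has no power saving there. In such a range Heath-Brown's large sieve, with its $X^{\varepsilon}$ loss, is also unusable; the paper notes that even Li's sharp bilinear estimate, applied directly, overshoots by $(\log X)^{\varepsilon}$.

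\textbf{The missing idea is to split in $n$, not in $d$, at a point only a log-power below $X$.} The paper writes $L'=\cA(d;Y)+\cB(d;Y)$ with $Y=X/(\log X)^{100}$.

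\begin{itemize}
\item For $\cA$ (effectively $n\lesssim Y$), the $d$-sum is written as a Mellin integral of $L(s,\chi_{Dnq'})L(s,\chi_{nq'})$ and the residues at $s=1$ are taken. The line $\Re s=1/2$ is bounded with absolute values in $n$, using Munshi's mean value $\sum_{n\sim N}|\lambda_f(n)|\,|L(1/2+it,\chi_{\pm4n})|^2\ll N(1+|t|)^3(\log N)^{9/2}$. This gives $(XY)^{1/2}(\log X)^{6}=o(X)$, precisely because $Y$ is a log-power smaller than $X$.
\item Cauchy--Schwarz is used only on $\cB$, one dyadic block $n\sim N$ at a time. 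Blocks with $N>X$ are handled by the decay of $W$, and blocks with $N\le Y$ go back to Munshi's argument.
\item There are only $O(\log\log X)$ blocks in $(Y,X]$. On each, Li's log-free large sieve (via Zhou's argument) gives $\sum_d|\sum_{n\sim N}\cdots|^2\ll X(\log\log X)^4$. Paired with $\sum\psi^2\ll X\log X$, each block costs $X(\log X)^{1/2}(\log\log X)^2$.
\end{itemize}

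This, not the density of integers whose primes all split, is where the error term $X(\log X)^{1/2}(\log\log X)^3$ comes from.

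\textbf{A separate, smaller problem is the main term.} Your main term retains only the zero frequencies with $n$ (essentially) a square. The poles with $Dnq'=\square$, for $q'\in\{-4,-4q\}$, contribute the $\cH_2$ terms in $c_{q,D}$, so as written your constant would be incomplete.
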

We conclude by remarking that, when $D=-4$, our main theorem may be viewed as an improvement of Theorem~1.1 in \cite{compMunshiMR2771124}. 

\subsection{Idea of Proof} The proof of Theorem \ref{thm.main theorem} combines the methods used in \cite{compMunshiMR2771124} and that in \cite{LiMR4768632} and \cite{zhou2025momentderivativesquadratictwists}.  A standard approximate functional equation argument and the application of the Mellin inverse transformation will imply
\[\sum_{d\in\cD}\psi(d,D)L^{\prime}(1/2,f\times\chi_d)J\left(\frac{d}{X}\right)\approx\frac{1}{2\pi i}\int_{(2)}\widetilde{J}(s)X^s\sum_{n\ll X^{1+\varepsilon}}\frac{\lambda_f(n)}{n^{1/2}}W\left(\frac{n}{d\sqrt{q}}\right)\sum_{d\geq1}\frac{\psi(d;D)\chi_d(n)}{d^s}\,ds,\]
where $W$ is a smooth function of rapid decay. In view of quadratic reciprocity, the Dirichlet series in $d$ is roughly $L(s,\chi_{Dn})L(s,\chi_n)$. We then shift the integration line to $\Re(s)=\frac{1}{2}$ and the poles will occur when $n$ or $Dn$ is a perfect square. After picking up the poles, the error term is of the form
\[E\approx\int_{(1/2)}\widetilde{J}(s)X^s\sum_{n\ll X^{1+\varepsilon}}\frac{\lambda_f(n)}{n^{1/2}}W\left(\frac{n}{d\sqrt{q}}\right)L(s,\chi_{Dn})L(s,\chi_n)\,ds.\]
At this stage, the standard method of applying the functional equations for $L(s, \chi_{n})$ and $L(s, \chi_{Dn})$ will be ineffective, as $n$ could be as large as $X$, and transforming the original summation length from $X$ to $n^2/X \asymp X^2/X = X$ shall not directly provide us with any saving in the estimation for $E$. We refer to more detail on this to the Section 1 and Section 4 of \cite{compMunshiMR2771124}.
On the other hand, upon taking absolute value to the innermost of $E$, Munshi showed, in \cite[Section 4-6]{compMunshiMR2771124},
\[E\ll X^{1/2}\int_{(1/2)}|\widetilde{J}(s)|\sum_{n\ll X^{1+\varepsilon}}\frac{|\lambda_f(n)|}{n^{1/2}}\left|W\left(\frac{n}{d\sqrt{q}}\right)\right||L(s,\chi_{Dn})L(s,\chi_n)|\,|ds|\ll X^{\frac{1}{2}}\cdot X^{\frac{1}{2}}(\log X)^{\frac{15}{2}}.\]

However, we notice that when working with a shorter range for $n$ of length $Y \ll X$, one may bound the truncated $E$ (denoted temporarily by $E(Y)$) by $o(X)$.
More precisely, following Munshi's method, one can show: for any $Y\ll X,$
\[E(Y)\ll X^{1/2}\int_{(1/2)}|\widetilde{J}(s)|\sum_{n\ll Y^{1+\varepsilon}}\frac{|\lambda_f(n)|}{n^{1/2}}\left|W\left(\frac{n}{d\sqrt{q}}\right)\right||L(s,\chi_{Dn})L(s,\chi_n)|\,|ds|\ll X^{\frac{1}{2}}\cdot Y^{\frac{1}{2}}(\log X)^{\frac{15}{2}}.\]

This motivates us to divide the $n$-summation in the approximate functional equation into two ranges:
$$
n\ll Y \quad \text{and} \quad n\gg Y,
$$
where $Y=\frac{X}{\log^{100}X}$.

For the first range, as discussed above, we may apply Munshi's method to extract the main term in Theorem~\ref{thm.main theorem} while controlling the error term.

For the second range, we employ Li's optimal estimate for bilinear forms of Hecke eigenvalues twisted by quadratic characters \cite[Proposition 3.2]{LiMR4768632}. It is worth noting that a direct application of Li's large sieve inequality would produce an error term exceeding the main term by a factor of $(\log X)^{\varepsilon}$. To overcome this difficulty, we adopt a more delicate argument inspired by the proof of \cite[Proposition~4.3]{zhou2025momentderivativesquadratictwists}. More precisely, we insert a smooth dyadic partition in $N$ over the second range and treat the three cases $N>X$, $Y\le N<X$, and $N<Y$ separately. In the first two cases, we apply the Cauchy--Schwarz inequality together with Li's estimation for bilinear forms. In the last case, we return to Munshi's method. Indeed, since $n\in[N,2N]$ and $N\le Y$, the resulting sum is again supported on $n\ll Y$, the range in which Munshi's argument is applicable.

\subsection{Notations} In this paper, the notation $\psum$ is the summation over all positive squarefree integers. The notation $\ssum$ is the dyadic summation. 

\section{Preliminaries}

\subsection{The weight function}

For the weight function $\psi(m,D),$ we have the following facts:
\begin{enumerate}
	\item $\psi(0,D)=0$ and $\psi(-m,D)=\psi(m,D).$
	\item If $m$ is a squarefree integer, then
	\[\psi(m,D)=\prod_{p|m}(1+\chi_D(p)).\]
	Moreover, assume that $m$ is squarefree. Then $\psi(m,D)\neq 0$ if and only if each $p|m$ is split in $\bQ(\sqrt{D}).$ In this case, $\psi(m,D)=\tau(d),$  the divisor function.
	\end{enumerate}
	
We prove the following lemma:
\begin{lemma}\label{lem. Hecke for psi}
	Let $m$ and $n$ be positive integers. Then 
	\[\psi(m,D)\psi(n,D)=\sum_{\ell|(m,n)}\chi_D(\ell)\psi\left(\frac{mn}{\ell^2},D\right).\]
	This implies, by M\"obius inversion,
	\[\psi(mn,D)=\sum_{\ell|(m,n)}\mu(\ell)\chi_D(\ell)\psi\left(\frac{m}{\ell},D\right)\psi\left(\frac{n}{\ell},D\right)\]
\end{lemma}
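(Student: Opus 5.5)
The plan is to recognize $\psi(\cdot,D)=1*\chi_D$ as the Dirichlet convolution of two completely multiplicative functions. Its Dirichlet series is therefore $\zeta(s)L(s,\chi_D)$, and $\psi$ behaves like the Hecke eigenvalues of the weight-one Eisenstein series attached to $(1,\chi_D)$. The first identity is the Hecke relation for such coefficients, with "nebentypus" $\chi_D=1\cdot\chi_D$. Since both sides are multiplicative in the pair $(m,n)$, the proof reduces to prime powers.

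Concretely, $\psi(\cdot,D)$ is multiplicative. The right-hand side $\sum_{\ell\mid(m,n)}\chi_D(\ell)\psi(mn/\ell^2,D)$ also factors over primes: writing $m=\prod p^{a_p}$ and $n=\prod p^{b_p}$, each divisor $\ell$ of $(m,n)$ factors uniquely as $\prod p^{c_p}$ with $c_p\le\min(a_p,b_p)$, and both $\chi_D$ and $\psi$ are multiplicative. So it suffices to check, for each prime $p$ and $a\le b$, that
\[
\psi(p^a,D)\psi(p^b,D)=\sum_{c=0}^{a}\chi_D(p)^c\,\psi(p^{a+b-2c},D).
\]
With $\alpha=1$, $\beta=\chi_D(p)$ we have $\psi(p^k,D)=\sum_{i+j=k}\alpha^i\beta^j$, a complete homogeneous symmetric polynomial $h_k(\alpha,\beta)$, which equals the Schur polynomial for $\mathrm{GL}_2$. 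The needed identity $h_ah_b=\sum_{c=0}^{a}(\alpha\beta)^c h_{a+b-2c}$ is the Clebsch--Gordan (Pieri) rule. I would verify it directly by induction on $a$, using $h_1h_k=h_{k+1}+\alpha\beta\,h_{k-1}$. Alternatively, multiply by $(\alpha-\beta)^2$, use $h_k=(\alpha^{k+1}-\beta^{k+1})/(\alpha-\beta)$, and telescope; when $\alpha=\beta$ the identity follows by continuity, or by treating $\chi_D(p)\in\{0,\pm1\}$ as a polynomial specialization. The case $\chi_D(p)=0$ (that is, $p\mid D$) is trivial, since only $c=0$ survives.

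For the second statement, I fix $m,n$ and apply Möbius inversion in the standard Hecke-algebra way. Substituting the first identity into $\sum_{\ell\mid(m,n)}\mu(\ell)\chi_D(\ell)\psi(m/\ell,D)\psi(n/\ell,D)$ gives
\[
\sum_{\ell\mid(m,n)}\mu(\ell)\chi_D(\ell)\sum_{k\mid(m/\ell,n/\ell)}\chi_D(k)\psi\!\left(\frac{mn}{\ell^2k^2},D\right).
\]
Setting $r=\ell k$ and using complete multiplicativity $\chi_D(\ell)\chi_D(k)=\chi_D(r)$, this becomes $\sum_{r\mid(m,n)}\chi_D(r)\psi(mn/r^2,D)\sum_{\ell\mid r}\mu(\ell)$. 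The inner sum vanishes unless $r=1$, which leaves $\psi(mn,D)$. Alternatively, the second identity can again be checked prime by prime using $h_{a+b}=h_ah_b-\alpha\beta h_{a-1}h_{b-1}$.

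The only step needing care is the prime-power Clebsch--Gordan identity, and in particular the degenerate case $\alpha=\beta$ (that is, $\chi_D(p)=1$) in the closed-form approach. I would therefore use the inductive argument via $h_1h_k=h_{k+1}+\alpha\beta h_{k-1}$, which avoids division and so has no degenerate case. Everything else is formal multiplicativity bookkeeping.
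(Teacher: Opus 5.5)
Your proof is correct and follows the same route as the paper: factor both sides over primes using the multiplicativity of $\psi(\cdot,D)$ and $\chi_D$, then verify the prime-power identity. The paper checks that local identity by explicit summation in the two cases $\chi_D(p)=\pm1$, and it simply asserts both the reduction to $(mn,D)=1$ and the M\"obius inversion step. You instead verify the identity uniformly via the recursion $h_1h_k=h_{k+1}+\alpha\beta\,h_{k-1}$, which also covers $p\mid D$, and you write out the inversion explicitly.
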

\begin{proof}
	It suffices to show, for $(mn,D)=1,$
	\[\psi(m,D)\psi(n,D)=\sum_{\ell|(m,n)}\chi_D(\ell)\psi\left(\frac{mn}{\ell^2},D\right).\]
	Noticing that $\psi(m,D)$ is a multiplicative function, it suffices to show that, for any prime $p$ satisfying $(p,D)=1$ and $\alpha,\beta$ being nonnegative integers,
	\[\psi(p^{\alpha},D)\psi(p^{\beta},D)=\sum_{\ell|(p^{\alpha},p^{\beta})}\chi_D(\ell)\psi\left(\frac{p^{\alpha+\beta}}{\ell^2},D\right).\]
	Without loss of generality, we can assume that $\alpha\leq\beta.$ We consider the following two cases:
	
	(a) if $\chi_D(p)=1,$ then $\psi(p^{\alpha},D)=\alpha+1$ and hence $\psi(p^{\alpha},D)\psi(p^{\beta},D)=(\alpha+1)(\beta+1).$ On the other hand:
	\[\sum_{\ell|(p^{\alpha},p^{\beta})}\chi_D(\ell)\psi\left(\frac{p^{\alpha+\beta}}{\ell^2},D\right)=\sum_{\ell=0}^{\alpha}(\alpha+\beta-2\ell+1)=(\alpha+1)(\beta+1).\]
	(b) if $\chi_D(p)=-1,$ then $\psi(p^{\alpha},D)=\frac{1+(-1)^{\alpha}}{2}$ and hence $\psi(p^{\alpha},D)\psi(p^{\beta},D)=\frac{1+(-1)^{\alpha}}{2}\cdot\frac{1+(-1)^{\beta}}{2}.$ On the other hand:
	\[\sum_{\ell|(p^{\alpha},p^{\beta})}\chi_D(\ell)\psi\left(\frac{p^{\alpha+\beta}}{\ell^2},D\right)=\sum_{\ell=0}^{\alpha}(-1)^{\ell}\frac{1+(-1)^{\alpha+\beta-2\ell}}{2}=\frac{1+(-1)^{\alpha}}{2}\cdot\frac{1+(-1)^{\beta}}{2}.\]
\end{proof}

\subsection{Approximate functional equation}\label{subsec. afe}
Let $f$ be a holomorphic newform of weight $2$ and (squarefree) level $q$. Assume that the global root number of $f$ is $\omega_f=-1.$ Let $d\equiv1\Mod{4}$ be a fundamental discriminant satisfying $(d,q)=1$ and $\chi_d(q)=1$ Then, by the approximate fucntional equation,
\[L'(1/2,f\times\chi_d)=(1+\chi_d(q))\sum_{n\geq1}\frac{\lambda_f(n)\chi_d(n)}{n^{1/2}}W\left(\frac{n}{d\sqrt{q}}\right),\]
where 
\begin{equation}\label{eq. W function}
W(y) := \frac{1}{2\pi i}\int_{(3)}\Gamma(1+w)(2\pi y)^{-w}\frac{dw}{w^2} = \frac{1}{2\pi i}\int_{(3)}\Gamma(w)(2\pi y)^{-w}\frac{dw}{w}.	
\end{equation}
We set $Y = X/(\log X)^{100}$. Then we define:
\[
	\mathcal{A}(d;Y) := (1 - \omega(f)\chi_d(q))\sum_{n\geq 1}\frac{\lambda_f(n)\chi_d(n)}{\sqrt{n}}W\left(\frac{n}{Y}\right)=(1 +\chi_d(q)) \sum_{n\geq 1}\frac{\lambda_f(n)\chi_d(n)}{\sqrt{n}}W\left(\frac{n}{Y}\right)
\]
and 
\[
	\mathcal{B}(d;Y) := L'(1/2, f\times \chi_d) - \mathcal{A}(d;Y).
\]
\subsection{Some special Dirichlet series}

In this section, we study some special Dirichlet series, which show up in later sections. We first recall the notations: let $q$ be a squarefree integer and $D<0$ be a fundamental discriminant satisfying $4|D$. Let $f$ be a holomorphic cusp form of weight $2$ and level $q$ satisfying $\omega_f=-1.$ 

\begin{lemma}\label{lem. euler for positive q prime}
	Assume the notations above. Let $q'\in\{4,4q\}.$ The following series:
	\[\cA_1(s;q',D):=\sum_{\substack{nq'=\square}}\frac{\lambda_f(n)}{n^{1/2+s}}\prod_{\substack{p|nq'\\p\nmid qD}}\frac{1-\chi_D(p)p^{-1}}{1+p^{-1}-(1+\chi_D(p))p^{-2}}\]
    has a Euler product expansion:
	\[\cA_1(s;q',D)=L(1+2s,\sym^2f)\cH_1(s,q',D),\]
	where $\cH_1(s,q',D)$ is a Euler product, absolutely convergent when $\Re(s)>-\frac{1}{4}.$ Moreover, if $(q,6)=1$, $\cH_1(s,q',D)>0$ for both $q'=4$ and $q'=4q.$
\end{lemma}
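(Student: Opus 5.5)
The summand is multiplicative in $n$ once we write $g(p)=\frac{1-\chi_D(p)p^{-1}}{1+p^{-1}-(1+\chi_D(p))p^{-2}}$ for $p\nmid qD$ and $g(p)=1$ for $p\mid qD$. The condition $nq'=\square$ is a local condition at each prime. Since $q'\in\{4,4q\}$ with $q$ squarefree and odd, the condition reads as follows: for $p\nmid 2q$, $v_p(n)$ is even; at $p=2$, $v_2(n)$ is even; for $p\mid q$, $v_p(n)$ is even when $q'=4$ and odd when $q'=4q$. The product over $p\mid nq'$, $p\nmid qD$, splits prime by prime, because $2\mid D$ and so $p=2$ never contributes. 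The factor $g(p)$ is attached exactly when $p\mid n$. Hence
\[
\cA_1(s;q',D)=\prod_{p}E_p(s),
\]
where for $p\nmid qD$
\[
E_p(s)=1+g(p)\sum_{k\ge1}\frac{\lambda_f(p^{2k})}{p^{k(1+2s)}},
\]
for $p\mid D$ (so $p\nmid q$) $E_p(s)=\sum_{k\ge0}\lambda_f(p^{2k})p^{-k(1+2s)}$, and for $p\mid q$ the sum runs over even or odd exponents according to $q'$. Multiplicativity of $\lambda_f$ justifies this formally, and Deligne's bound gives absolute convergence for $\Re(s)$ large.

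Next I compare with $L(1+2s,\sym^2 f)$. For $p\nmid q$, the Hecke relations give $\sum_{k\ge0}\lambda_f(p^{2k})x^{k}=\frac{1+x}{(1-\alpha^2x)(1-x)(1-\beta^2x)}$ with $x=p^{-1-2s}$, which is $(1+x)L_p(1+2s,\sym^2f)$ up to normalisation. Then I set $\cH_1=\prod_p E_p(s)L_p(1+2s,\sym^2f)^{-1}$.

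For $p\nmid qD$ we have $g(p)=1+O(p^{-1})$, so
\[
E_p(s)L_p^{-1}=1+(g(p)-1)(\cdots)+O\bigl(p^{-2-4\Re s}\bigr)=1+O\bigl(p^{-2-2\Re s}+p^{-2-4\Re s}\bigr).
\]
Here the term $x$ from $(1+x)$ combines with $\lambda_f(p^2)x=(\alpha^2+1+\beta^2)x$ versus the $\sym^2$ first coefficient $(\alpha^2+1+\beta^2)x$. I expect the linear terms to cancel exactly, since $\sum_k\lambda_f(p^{2k})x^k=L_p(1+x)(1-x)/(1-x)$ differs from $L_p$ only by $(1+x)$ times lower-order factors. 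This is the one computation to check carefully: the linear coefficient of $\sum_k\lambda_f(p^{2k})x^k$ is $\lambda_f(p^2)=\alpha^2+1+\beta^2$, which matches $L_p(\sym^2)$ exactly. So the error is $O(p^{-2(1+2\Re s)})+O(p^{-1}\cdot p^{-1-2\Re s})$, which is summable for $\Re s>-1/4$. Finitely many primes $p\mid qD$ give harmless factors; for $p\mid q$ I use $\lambda_f(p^k)=\lambda_f(p)^k=\pm p^{-k/2}$, which converges for $\Re s>-1/2$. This proves the factorisation and the absolute convergence.

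Positivity at real $s$ is the main obstacle, and I only need it at the point where it is used, namely real $s$ (in particular $s=0$). Absolute convergence means it suffices to show each local factor is positive. For $p\mid q$, $E_p$ is a geometric series in $\lambda_f(p)^2p^{-1-2s}=p^{-2-2s}$, or $\lambda_f(p)p^{-1/2-s}$ times one; the sign of $\lambda_f(p)=-\epsilon_p$ could be negative when $q'=4q$. I would handle this by noting that the product of the local signs $-\lambda_f(p)\sqrt p$ over $p\mid q$ is $\omega_f=-1$, together with the sign of the Atkin--Lehner constraint, or by absorbing $\prod_{p\mid q}\lambda_f(p)\sqrt p$ into the constant; I'd first try to verify the paper only needs $\cH_1\ne0$ up to a stated sign. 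For $p\nmid qD$, $\lambda_f(p^{2k})$ can be negative, so I write $E_p=1-g(p)+g(p)\frac{1+x}{(1-\alpha^2x)(1-x)(1-\beta^2x)}$ with $x=p^{-1}$. The denominator is positive since $|\alpha^2x|<1$. Also $1-g(p)\ge0$ and $g(p)>0$ for $p\ge3$ using $(q,6)=1$, $p\ne2$ and $\chi_D(p)=\pm1$, and I divide by the positive $L_p$. At $p=2\mid D$ the factor is $(1+x)L_2$ times $L_2^{-1}$, which is positive. This gives positivity of $\cH_1$.
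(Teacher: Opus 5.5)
Your factorization, the absolute convergence for $\Re(s)>-\frac14$, and the positivity of $\cH_1(s,4,D)$ at real $s$ follow the paper's route: compare local factors with $L_p(1+2s,\sym^2 f)$. Writing the split-prime factor as $(1-g(p))+g(p)\sum_k\lambda_f(p^{2k})x^k$, with both pieces positive, is a cleaner justification than the paper's ``by checking the local factors''.

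There is one slip. The even-part generating function is
\[
\sum_{k\ge0}\lambda_f(p^{2k})x^k=\frac{1+x}{(1-\alpha^2x)(1-\beta^2x)}=(1-x^2)\,L_p(1+2s,\sym^2f),\qquad x=p^{-1-2s},
\]
not $(1+x)L_p$. Your displayed formula carries an extra factor $(1-x)^{-1}$, and with it the linear terms would not cancel. With the correct identity you recover exactly the paper's local factors:
\begin{itemize}
\item $\zeta_p(2+4s)^{-1}=1-x^2$ at $p\mid D$ and at inert $p$;
\item $(1-g(p))L_p^{-1}+g(p)(1-x^2)$ at split $p$.
\end{itemize}
All of these are positive for real $s>-\frac14$, so your argument for $q'=4$ stands.

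\textbf{The gap.} The genuine gap is $q'=4q$, which you leave open, and it cannot be closed as stated. The local computation gives
\[
\cH_1(s,4q,D)=\lambda_f(q)\,q^{-1/2-s}\,\cH_1(s,4,D),
\]
so for real $s$ the sign is that of $\lambda_f(q)$. Your proposed sign identity is off. For weight $2$ and squarefree level, $\lambda_f(p)=-\epsilon_p p^{-1/2}$ with $\epsilon_p$ the Atkin--Lehner eigenvalue, and $\omega_f=i^2\prod_{p\mid q}\epsilon_p$. Hence $\prod_{p\mid q}(-\lambda_f(p)\sqrt p)=-\omega_f=1$, not $\omega_f$, and so $\lambda_f(q)=\mu(q)/\sqrt q$.

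Consequently $\cH_1(s,4q,D)<0$ whenever $q$ has an odd number of prime factors. A concrete case: $q=37$, $f$ the newform attached to the elliptic curve $37a$ (with $\omega_f=-1$, $a_{37}=-1$), and $D=-4$.

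The paper's proof closes this step by asserting $\lambda_f(q)=-\omega_f/\sqrt q>0$, which is the same sign error. What is true is that $\cH_1(0,4q,D)\neq0$, with sign $\mu(q)$. That is all Remark \ref{rem. constant is positive} actually uses, since it only needs $1+\lambda_f(q)q^{-1/2}=1\pm q^{-1}>0$. You should prove this nonvanishing statement (with the sign $\mu(q)$) in place of positivity for $q'=4q$.
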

\begin{proof}
	We first assume that $\Re(s)\gg1.$ Since $4|D,$ we have $q'|qD.$ We set $q_0'$ is the squarefree part of $q',$ that is, $q_0'=1$ if $q'=4$ and $q_0'=q$ is $q'=4q.$ Then
\begin{flalign*}
\cA_1(s;q',D)&=\sum_{\substack{nq'=\square}}\frac{\lambda_f(n)}{n^{1/2+s}}\prod_{\substack{p|n\\p\nmid qD}}\frac{1-\chi_D(p)p^{-1}}{1+p^{-1}-(1+\chi_D(p))p^{-2}}\\
	&=\prod_{p|qD}\left(\sum_{\ell\geq0}\frac{\lambda_f(p^{2\ell+e_p(q_0')})}{p^{(2\ell+e_p(q_0'))(1/2+s)}}\right)\prod_{p\nmid qD}\left(1+\frac{1-\chi_D(p)p^{-1}}{1+p^{-1}-(1+\chi_D(p))p^{-2}}\left(\sum_{\ell\geq1}\frac{\lambda_f(p^{2\ell})}{p^{\ell(1+2s)}}\right)\right)\\
	&=\prod_{p|qD}\left(\sum_{\ell\geq0}\frac{\lambda_f(p^{2\ell+e_p(q_0')})}{p^{(2\ell+e_p(q_0'))(1/2+s)}}\right)\prod_{\substack{p\nmid qD\\\chi_D(p)=-1}}\left(\sum_{\ell\geq0}\frac{\lambda_f(p^{2\ell})}{p^{\ell(1+2s)}}\right)\prod_{\substack{p\nmid qD\\\chi_D(p)=1}}\left(1+\frac{1}{1+2p^{-1}}\left(\sum_{\ell\geq1}\frac{\lambda_f(p^{2\ell})}{p^{\ell(1+2s)}}\right)\right)\\
	&=L(1+2s,\sym^2f)\cH_1(s,q',D)
	\end{flalign*}
	$\cH_1(s,q',D)$ is absolutely convergent for $\Re(s)>-\frac{1}{4}.$ By Rankin-Selberg theory, we have:
	\[L_p(s,\sym^2f)=\zeta_p(2s)\left(\sum_{\ell\geq0}\frac{\lambda_f(p^{2\ell})}{p^{\ell s}}\right)\]
	 for $p\nmid q.$ By Atkin-Lehner theory, $\lambda_f(p)^{\ell}=\lambda_f(p^{\ell})$ for any $p|q$ and $\ell\geq0.$ This will imply: if we set $\cH_1(s,q',D)=\prod_{p}\cH_{1,p}(s,q',D),$ then
	 \[\cH_{1,p}(s,4,D)=\begin{cases}
	 	1&\mbox{if $p|q$}\\
	 	\zeta_p(2+4s)^{-1}&\mbox{if $p|D$}\\
	 	\zeta_p(2+4s)^{-1}&\mbox{if $p\nmid qD$ and $\chi_D(p)=-1$}\\
	 	\zeta_p(2+4s)^{-1}+\frac{2}{p+2}\left(L_p(1+2s,\sym^2f)^{-1}-\zeta_p(2+4s)^{-1}\right)&\mbox{if $p\nmid qD$ and $\chi_D(p)=1$}\\
	 \end{cases}\]
	 and
	 \[\cH_{1,p}(s,4q,D)=\begin{cases}
	 	\lambda_f(p)p^{-1/2-s}&\mbox{if $p|q$}\\
	 	\cH_{1,p}(s,4,D)&\mbox{otherwise}
	 \end{cases}\]
This yields the following relation:	 
	 \[\cH_1(s,4q,D)=\frac{\lambda_f(q)}{q^{1/2+s}}\cH_1(s,4,D).\]
By checking the local factors, $\cH_1(0,q',D)\neq0$ provided that $(q,6)=1.$ In this case, $\cH_1(0,4,D)>0.$ Then $\cH_1(0,4q,D)$ has the same sign with $\lambda_f(q).$
	This is positive since $\lambda_f(q)=-\frac{\omega_f}{\sqrt{q}}>0.$
	\end{proof}

For $q'\in\{-4,-4q\},$ we have the following similar lemma:
\begin{lemma}\label{lem. euler for negative q prime}
	Assume the notations above. Let $q'\in\{-4,-4q\}.$ The following series:
	\[\cA_2(s;q',D):=\sum_{\substack{Dnq'=\square}}\frac{\lambda_f(n)}{n^{1/2+s}}\prod_{\substack{p|nq'\\p\nmid qD}}\frac{1-\chi_D(p)p^{-1}}{1+p^{-1}-(1+\chi_D(p))p^{-2}}\]
    has a Euler product expansion:
	\[\cA_2(s;q',D)=L(1+2s,\sym^2f)\cH_2(s,q',D),\]
	where $\cH_1(s,q',D)$ is a Euler product, absolutely convergent when $\Re(s)>-\frac{1}{4}.$ Moreover, if $(q,6)=1$, $\cH_2(s,q',D)\neq0$ for both $q'=-4$ and $q'=-4q.$
\end{lemma}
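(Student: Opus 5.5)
I will follow the proof of Lemma \ref{lem. euler for positive q prime} almost verbatim. The only change is that the square condition $nq'=\square$ becomes $Dnq'=\square$. First assume $\Re(s)\gg1$, so everything converges absolutely. Write $D=-4D_0$. Since $D$ is a fundamental discriminant with $4\mid D$, we have $D_0\equiv 1,2\Mod{4}$ squarefree. Then $Dq'=16D_0$ when $q'=-4$, and $Dq'=16D_0q$ when $q'=-4q$. Since $(q,D)=1$, the squarefree part of $Dq'$ is $q_0'':=D_0$ or $D_0q$, and every prime dividing $q_0''$ divides $qD$. Hence the condition $Dnq'=\square$ says exactly that $v_p(n)\equiv e_p(q_0'')\Mod 2$ for every $p$. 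In particular, for $p\nmid qD$ the exponent $v_p(n)$ must be even. By multiplicativity of $\lambda_f$ and of the product weight (supported on $p\nmid qD$), I get
\[
\cA_2(s;q',D)=\prod_{p\mid qD}\Big(\sum_{\ell\ge0}\frac{\lambda_f(p^{2\ell+e_p(q_0'')})}{p^{(2\ell+e_p(q_0''))(1/2+s)}}\Big)\prod_{p\nmid qD}\Big(1+\frac{1-\chi_D(p)p^{-1}}{1+p^{-1}-(1+\chi_D(p))p^{-2}}\sum_{\ell\ge1}\frac{\lambda_f(p^{2\ell})}{p^{\ell(1+2s)}}\Big).
\]
The factors at $p\nmid qD$ are identical to those in Lemma \ref{lem. euler for positive q prime}. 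I therefore define $\cH_{2,p}=\cH_{1,p}(s,4,D)$ for such $p$, and these are already known to be $1+O(p^{-1-4\Re s}+p^{-2})$-type factors, so the product converges for $\Re(s)>-1/4$.

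At the finitely many primes $p\mid qD$, I use Atkin--Lehner theory for $p\mid q$, namely $\lambda_f(p^k)=\lambda_f(p)^k$ with $\lambda_f(p)^2=1/p$. For $p\mid D$ with $p\nmid q$, I use the Hecke relation together with $L_p(s,\sym^2 f)=\zeta_p(2s)\sum_\ell\lambda_f(p^{2\ell})p^{-\ell s}$. If $e_p(q_0'')=0$, the local factor is $L_p(1+2s,\sym^2f)\zeta_p(2+4s)^{-1}$ (for $p\nmid q$) or $1$ times the symmetric-square factor (for $p\mid q$), exactly as before. If $e_p(q_0'')=1$, the local factor is $\sum_\ell\lambda_f(p^{2\ell+1})p^{-(2\ell+1)(1/2+s)}$. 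For $p\mid q$ this equals $\lambda_f(p)p^{-1/2-s}$ times the even series. For $p\mid D_0$, $p\nmid q$, it is a finite combination $p^{-1/2-s}\big(\lambda_f(p)\cdot(\text{even series})\big)$ after using $\lambda_f(p^{2\ell+1})=\lambda_f(p)\sum_{j\le \ell}\dots$. Concretely, the odd generating function is $\lambda_f(p)p^{-1/2-s}/(1-\lambda_f(p)^2\dots)$ and can be read off from the rational local generating function $\sum_k\lambda_f(p^k)x^k=(1-\lambda_f(p)x+x^2)^{-1}$. In each case I factor out $L_p(1+2s,\sym^2 f)$ and obtain a holomorphic, nonvanishing-for-$\Re s>-1/4$ remainder, up to an explicit polynomial factor in $\lambda_f(p)p^{-1/2-s}$.

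This gives $\cA_2=L(1+2s,\sym^2f)\cH_2(s,q',D)$, with $\cH_2$ absolutely convergent for $\Re(s)>-1/4$. The main obstacle is the nonvanishing at $s=0$, because the local factor at $p\mid D_0$, $p\nmid q$, carries a factor $\lambda_f(p)$ (times a unit), and $\lambda_f(p)$ could vanish. I would handle this as follows. For $p\nmid qD$ with $\chi_D(p)=1$, the factor is $\zeta_p(2)^{-1}+\frac{2}{p+2}\big(L_p(1,\sym^2f)^{-1}-\zeta_p(2)^{-1}\big)$. Using Deligne's bound $|\lambda_f(p)|\le 2$, this is positive for $p\ge5$, as in the previous lemma, and $(q,6)=1$ handles $p=2,3$ where needed. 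For the primes dividing $D_0$, I would note that the statement asserts only nonvanishing. So I expect the odd-exponent factor either to be expressed as $p^{-1/2}\lambda_f(p)\cdot(\text{positive})$, in which case I must argue $\lambda_f(p)\neq0$ or absorb it into the definition, or to be rewritten so that the remaining factor is manifestly nonzero. If a genuine $\lambda_f(p)$ factor survives, I would state nonvanishing modulo it, mirroring the role of $\lambda_f(q)q^{-1/2}$ in Lemma \ref{lem. euler for positive q prime}. This last point is the step I would check most carefully.
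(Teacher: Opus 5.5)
Your factorization is the paper's argument. The squarefree part of $Dq'$ is $q_0'D_0$ (your $q_0''$), the factors at $p\nmid qD$ are those of $\cH_1(s,4,D)$, and the primes $p\mid qD$ are handled by Atkin--Lehner and the Hecke relations.

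You should make the odd factor explicit rather than leaving it as ``a finite combination''. For $p\nmid q$ and $x=p^{-1/2-s}$, the relation $\lambda_f(p)\lambda_f(p^k)=\lambda_f(p^{k+1})+\lambda_f(p^{k-1})$ gives
\[
(1+x^2)\sum_{\ell\ge0}\lambda_f(p^{2\ell+1})x^{2\ell+1}=\lambda_f(p)\,x\sum_{\ell\ge0}\lambda_f(p^{2\ell})x^{2\ell}.
\]
So the local factor at $p\mid D_0$ is $\lambda_f(p)p^{-1/2-s}\zeta_p(1+2s)^{-1}L_p(1+2s,\sym^2 f)$. Comparing local factors then gives the relations the paper records:
\[
\cH_2(s,-4,D)=\frac{\lambda_f(D_0)}{D_0^{1/2+s}}\prod_{p\mid D_0}\frac{1}{1+p^{-1-2s}}\,\cH_1(s,4,D),\qquad \cH_2(s,-4q,D)=\frac{\lambda_f(q)}{q^{1/2+s}}\,\cH_2(s,-4,D).
\]

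The point you flagged is not a gap in your argument but an overstatement in the lemma.
\begin{itemize}
\item Since $\cH_1(0,4,D)>0$ and $\lambda_f(q)=\pm q^{-1/2}\neq0$, these relations show that $\cH_2(0,q',D)\neq0$ if and only if $\lambda_f(D_0)\neq0$.
\item The hypotheses do not exclude $\lambda_f(p)=0$ for some $p\mid D_0$. If $f$ comes from an elliptic curve, take $D=-8p$ with $p\ge5$, $p\nmid q$, a supersingular prime, so that $a_p=0$.
\item The paper's proof also ends at the displayed relations and never justifies the nonvanishing.
\item What is used later, in the Remark, is only that the sum of the four constants is positive. That sum equals $(1+r)\bigl(1+\lambda_f(q)q^{-1/2}\bigr)\cH_1(0,4,D)$ with $r=\prod_{p\mid D_0}\lambda_f(p)/(p^{1/2}+p^{-1/2})\in(-1,1]$ by Deligne. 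This is positive whether or not $\lambda_f(D_0)=0$.
\end{itemize}
So the honest conclusion of your proof is ``$\cH_2(0,q',D)\neq0$ if and only if $\lambda_f(D_0)\neq0$''. Do not try to prove more.

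Two minor corrections:
\begin{itemize}
\item The factors at $p\nmid qD$ are $1+O(p^{-2-4\Re s}+p^{-2-2\Re s})$, not $1+O(p^{-1-4\Re s}+\cdots)$. Your bound would only give convergence for $\Re s>0$.
\item What you actually know is positivity at $s=0$, not nonvanishing on the whole half-plane $\Re s>-1/4$. At $s=0$, every local factor of $\cH_1(0,4,D)$ is positive for all $p$, since $L_p(1,\sym^2 f)^{-1}\ge(1-p^{-1})^3$ by Deligne.
\end{itemize}
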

\begin{proof}
	We set $D_0$ to be the squarefree part of $D.$ Since $4|D$ and $D$ is a fundamental discriminant, $D_0=|D|/4.$ We again set $q_0'$ the squarefree part of $q'.$ Then $q_0'=|q'|/4$ since $q'\in\{-4,-4q\},$ $q>0$ and $q$ is squarefree. Similar to the proof of Lemma \ref{lem. euler for positive q prime}, we can show:
\begin{flalign*}
\cA_2(s;q',D)&=\prod_{p|qD}\left(\sum_{\ell\geq0}\frac{\lambda_f(p^{2\ell+e_p(q_0'D_0)})}{p^{(2\ell+e_p(q_0'D_0))(1/2+s)}}\right)\prod_{\substack{p\nmid qD\\\chi_D(p)=-1}}\left(\sum_{\ell\geq0}\frac{\lambda_f(p^{2\ell})}{p^{\ell(1+2s)}}\right)\prod_{\substack{p\nmid qD\\\chi_D(p)=1}}\left(1+\frac{1}{1+2p^{-1}}\left(\sum_{\ell\geq1}\frac{\lambda_f(p^{2\ell})}{p^{\ell(1+2s)}}\right)\right)\\
	&=L(1+2s,\sym^2f)\cH_2(s,q',D).
	\end{flalign*}		
We notice that, for $p\nmid q$
\[\left(1+\frac{1}{p^{2s}}\right)\left(\sum_{\ell\geq0}\frac{\lambda_f(p^{2\ell+1})}{p^{(2\ell+1)s}}\right)=\frac{\lambda_f(p)}{p^{s}}\left(\sum_{\ell\geq0}\frac{\lambda_f(p^{2\ell})}{p^{2\ell s}}\right)=\frac{\lambda_f(p)}{p^s}\frac{L_p(2s,\sym^2f)}{\zeta_p(4s)},\]	
which implies:
\[\sum_{\ell\geq0}\frac{\lambda_f(p^{2\ell+1})}{p^{(2\ell+1)s}}=\frac{\lambda_f(p)}{p^s}\frac{L_p(2s,\sym^2f)}{\zeta_p(2s)}.\]
	This yields, if we write $\cH_{2}(s,q',D)=\prod_{p}\cH_{2,p}(s,q',D)$,
		 \[\cH_{2,p}(s,-4,D)=\begin{cases}
	 	1&\mbox{if $p|q$}\\
	 	\lambda_f(p)p^{-1/2}\zeta_p(1+2s)^{-1}&\mbox{if $8|D$ and $p|D$ or $4\mid\!\mid D$ and $p\neq2$}\\
	 	\zeta_p(2+4s)^{-1}&\mbox{if $4\mid\!\mid D$ and $p=2$}\\
	 	\zeta_p(2+4s)^{-1}&\mbox{if $p\nmid qD$ and $\chi_D(p)=-1$}\\
	 	\zeta_p(2+4s)^{-1}+\frac{2}{p+2}\left(L_p(1+2s,\sym^2f)^{-1}-\zeta_p(2+4s)^{-1}\right)&\mbox{if $p\nmid qD$ and $\chi_D(p)=1$}\\
	 \end{cases}\]
	 and
	 \[\cH_{2,p}(s,-4q,D)=\begin{cases}
	 	\lambda_f(p)p^{-1/2-s}&\mbox{if $p|q$}\\
	 	\cH_{2,p}(s,4,D)&\mbox{otherwise}
	 \end{cases}.\]
	 This yields the following relation:	 
	 \[\cH_2(s,-4q,D)=\frac{\lambda_f(q)}{q^{1/2+s}}\cH_2(s,-4,D)\]
	 and
	 \[\cH_2(s,-4,D)=\frac{\lambda_f(D_0)}{D_0^{1/2+s}}\prod_{p|D_0}\frac{1}{1+p^{-1-2s}}\cH_1(s,4,D).\]
\end{proof}

\begin{remark}\label{rem. constant is positive}
	Let $D_0$ be the squarefree part of $D.$ By the relations in Lemma \ref{lem. euler for positive q prime} and Lemma \ref{lem. euler for negative q prime}, we can show:
	 \begin{flalign*}
	 	\cH_1(0,4,D)+\cH_1(0,4q,D)+\cH_2(0,-4,D)+\cH_2(0,-4q,D)&=\left(1+\frac{\lambda_f(D_0)}{D_0\prod_{p|D_0}(1+p^{-1})}\right)\left(1+\frac{\lambda_f(q)}{q^{1/2}}\right)\cH_1(0,4,D).
	 \end{flalign*}
	We can easily check, by Delinge's bound, that 
	\[\left|\frac{\lambda_f(D_0)}{D_0^{1/2}\prod_{p|D_0}(1+p^{-1})}\right|=\prod_{p|D_0}\left|\frac{\lambda_f(p)}{p^{1/2}+p^{-1/2}}\right|<1.\] 
	Therefore, the summation above is positive when $(q,6)=1$.
\end{remark}

\subsection{Roadmap to the proof of Theorem \ref{thm.main theorem}}
In this section, we prove Theorem \ref{thm.main theorem} assuming two key propositons. We recall the notations in \S \ref{subsec. afe}. 

Then Theorem \ref{thm.main theorem} follows form the Propositions below.

\begin{prop}\label{prop.difference prop}
	With the notations above,
	\[\sum_{d\in\cD}\psi(d,D)\cB(d;Y)J\left(\frac{d}{X}\right)\ll X(\log X)^{\frac{1}{2}}(\log\log X)^3.\]
\end{prop}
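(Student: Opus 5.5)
Since $\omega_f=-1$ and $\chi_d(q)=1$, the difference is
\[
\cB(d;Y)=2\sum_{n\geq1}\frac{\lambda_f(n)\chi_d(n)}{\sqrt n}\Big(W\Big(\frac{n}{d\sqrt q}\Big)-W\Big(\frac nY\Big)\Big).
\]
I will write $W(x/A)-W(x/B)$ as a Mellin integral, $\frac{1}{2\pi i}\int \Gamma(w)(2\pi)^{-w}(A^w-B^w)\,x^{-w}\,dw/w$, where $A=d\sqrt q\asymp X$ and $B=Y$. On the line $\Re w=\varepsilon$ the factor $(A^w-B^w)/w$ has size $\ll\log(A/B)\ll\log\log X$ for $|w|\ll 1/\log X$; in general it is bounded by $\min(\log(X/Y),1/|w|)X^{\varepsilon}$. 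Equivalently, I will insert a smooth dyadic partition of unity $\sum_N V(n/N)$. Then $W(n/(d\sqrt q))-W(n/Y)$ is negligible for $N\ll Y(\log X)^{-1}$, since both terms equal $1$ up to $O(N/Y)$ corrections whose $n$-sum is handled separately. It is also negligible for $N\gg X^{1+\varepsilon}$. What remains is $O(\log\log X)$ dyadic blocks with $N\in[Y/\log X,X\log X]$, plus tails. For each block I have to bound
\[
S(N)=\sum_{d\in\cD}\psi(d,D)J(d/X)\sum_{n}\frac{\lambda_f(n)\chi_d(n)}{\sqrt n}V\Big(\frac nN\Big)\Big(W\Big(\frac{n}{d\sqrt q}\Big)-W\Big(\frac nY\Big)\Big).
\]
I separate the $d$ and $n$ variables by the Mellin integral above, truncated at $|\Im w|\le(\log X)^2$, with loss $O(\log\log X)$.

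\textbf{Case $N\ge Y$.} I will follow \cite[Prop.~4.3]{zhou2025momentderivativesquadratictwists}. By Lemma~\ref{lem. Hecke for psi} I expand $\psi(d,D)=\sum_{r\mid d}\chi_D(r)$. Writing $d=rm$ with $r,m$ squarefree and coprime, the sum becomes a trilinear form
\[
\sum_{r}\chi_D(r)\sum_m\sum_n \alpha_r\beta_m\gamma_n\Big(\frac{rm}{n}\Big),
\]
where I use quadratic reciprocity (valid since $d\equiv1\bmod 4$). The condition $\chi_d(q)=1$ and the squarefree condition are detected by $\frac12(1+\chi_d(q))$ and $\sum_{a^2\mid d}\mu(a)$. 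Large $a$ is handled trivially. I then apply Cauchy--Schwarz in the variable $d$ (grouping $r$ and $m$), and Li's bilinear bound \cite[Prop.~3.2]{LiMR4768632}:
\[
\sum_{d}^{*}\Big|\sum_n \frac{\lambda_f(n)}{\sqrt n}\Big(\frac dn\Big)c_n\Big|^2\ll (X+N)\sum_n\frac{|c_n|^2}{n}\ll X+N
\]
for bounded, smooth $c_n$ supported in $[N,2N]$. The $d$-side second moment is $\sum_{d\asymp X}\psi(d,D)^2\asymp X\log X$. This gives $S(N)\ll (X\log X)^{1/2}X^{1/2}=X(\log X)^{1/2}$ per block. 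Summing over the $O(\log\log X)$ blocks, and including the Mellin truncation loss and the divisor bookkeeping, gives $X(\log X)^{1/2}(\log\log X)^3$.

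\textbf{Case $N<Y$.} The only blocks here have $N\in[Y/\log X,Y]$, together with the Taylor remainder coming from the region where $W(n/Y)-W(n/(d\sqrt q))$ is small. I will follow Munshi: execute the $d$-sum first as $\sum_d \psi(d,D)\chi_d(n)d^{-s}\approx L(s,\chi_{Dn})L(s,\chi_n)$, and shift to $\Re s=1/2$. The poles at $n=\square$ and $Dn=\square$ contribute a difference of main terms. Using Lemmas~\ref{lem. euler for positive q prime} and~\ref{lem. euler for negative q prime}, this difference is $\ll X\log(X/Y)\ll X\log\log X$. The remaining integral is bounded as in the introduction by $X^{1/2}N^{1/2}(\log X)^{15/2}$. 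This is the step I expect to be the main obstacle. With $N\le Y=X(\log X)^{-100}$ it gives only $X(\log X)^{-42.5}$, which is acceptable, but the Taylor-remainder region requires care: I need to keep the $N/Y$ decay so that summing over dyadic $N<Y/\log X$ costs nothing.

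The cases $N\ge X^{1+\varepsilon}$ are negligible by the rapid decay of $W$. Summing all contributions gives the proposition.
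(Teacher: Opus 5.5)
There is a genuine gap in how you treat small $N$. You assert that $W(n/(d\sqrt q))-W(n/Y)$ is negligible for $N\ll Y(\log X)^{-1}$ because both terms are $1+O(N/Y)$. That is the behaviour of the weight for central \emph{values}, not for the derivative. Here $W(y)=\frac{1}{2\pi i}\int_{(3)}\Gamma(w)(2\pi y)^{-w}\frac{dw}{w}$ has a double pole at $w=0$, so $W(y)=-\log(2\pi y)-\gamma+O(y)$ as $y\to0$; this is why $W(n/Y)\ll\log Y$ for $n\le Y$. Your own Mellin representation shows the same thing: $\Gamma(w)(A^w-B^w)/w$ has a simple pole at $w=0$ with residue $\log(A/B)$. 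Hence
\[
W\Big(\frac{n}{d\sqrt q}\Big)-W\Big(\frac nY\Big)=\log\frac{d\sqrt q}{Y}+O\Big(\frac nY\Big),\qquad n\le Y,
\]
and $\log(d\sqrt q/Y)=100\log\log X+O(1)$ for $d\asymp X$, uniformly in $n$. So the range $n\le Y/\log X$, which is about $\log X$ dyadic blocks, is not negligible. Its diagonal part (from $nq'=\square$ or $Dnq'=\square$) has size $X\log\log X$, and its off-diagonal part needs an actual argument. Your proposal discards this range entirely, and there is no ``$N/Y$ decay'' to rely on when summing over $N<Y/\log X$.

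The repair is the paper's route: run the Munshi-type argument on every dyadic block $N\le Y$, not only on $N\in[Y/\log X,Y]$.

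The off-diagonal part on $\Re s=\frac12$ is $\ll (XN)^{1/2}(\log X)^{O(1)}$ per block. It sums over dyadic $N\le Y$ to $(XY)^{1/2}(\log X)^{O(1)}$, which is harmless.

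For the polar terms, a per-block bound $\ll X\log\log X$ is not enough, since there are $\asymp\log X$ blocks. You must use the cancellation in $\sum_{nq'=\square}\lambda_f(n)n^{-1/2}G(n/N)$. This comes from $\cA_i(s;q',D)=L(1+2s,\sym^2f)\cH_i(s,q',D)$ being holomorphic for $\Re s>-\frac14$. The paper proceeds in three steps:
\begin{enumerate}
\item It writes $G(n/N)$ by Mellin inversion and moves $u$ to $\Re u=-\frac1{16}$, gaining a factor $N^{-1/16}$.
\item It moves $w$ from the contour $C_\varepsilon$ past $w=0$. The residue there is $\widetilde J(1)(\log X-\log Y)+O(1)\ll\log\log X$.
\item Summing $N^{-1/16}$ over dyadic $N$ then gives $\cS_{N\le Y}\ll X\log\log X$.
\end{enumerate}

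With that correction, your treatment of $N\ge Y$ agrees in substance with the paper's. That treatment is Cauchy--Schwarz against $\sum\psi(d,D)^2J(d/X)\ll X\log X$, combined with a Zhou-type second-moment bound over $O(\log\log X)$ blocks. For $N>X$ you still need the exponential decay of $W(n/(d\sqrt q))$ rather than the bare bound $X+N$.
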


Proposition \ref{prop.difference prop} reduces Theorem \ref{thm.main theorem} to the analysis of the first moment of $\cA(d;Y),$ which will be investigated in the following proposition. Before the statement of the proposition, we introduce the following constants:
\begin{equation}\label{eq.constant in euler}
	c_{q,D}=\frac{(\cH_1(0,4,D)+\cH_1(0,4q,D)+\cH_2(0,-4,D)+\cH_2(0,-4q,D))}{2}
\end{equation}
and
\begin{equation}\label{eq. extra term}
	\cG(q,D)=\prod_{\substack{p\nmid qD}}\left(1-\frac{1}{p^2}\left(\psi(p^2;D)-\frac{\chi_D(p)\psi(p;D)}{p}\right)\right)\prod_{\substack{p|qD}}\left(1-\frac{1}{p}\left(\psi(p;D)-\frac{\chi_D(p)}{p}\right)\right).
\end{equation}
For $q'\in\{\pm4,\pm 4q\}$ and $i\in\{1,2\}$, the functions $\cH_i(s,q',D)$ are defined in Lemma \ref{lem. euler for positive q prime} and Lemma \ref{lem. euler for negative q prime}. When $(q,6)=1$, $c_{q,D}>0$ by Remark \ref{rem. constant is positive}. We can also show that $\cG(q,D)>0.$

\begin{prop}\label{prop.truncated sum}
	With the notations above,
	\begin{flalign*}
		\sum_{d\in\cD}\psi(d,D)\cA(d;Y)J\left(\frac{d}{X}\right)=c_{q,D}\cG(q,D)\widetilde{J}(1)L(1,\chi_D)X\log X+O(X\log\log X),
	\end{flalign*}
	where $c_{q,D}$ (resp. $\cG(q,D)$) is defined in \eqref{eq.constant in euler} (resp. \eqref{eq. extra term}).
\end{prop}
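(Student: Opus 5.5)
Write $\cA(d;Y)=2\sum_{n\ge1}\lambda_f(n)\chi_d(n)n^{-1/2}W(n/Y)$ for $d\in\cD$; here $\chi_d(q)=1$, so $1+\chi_d(q)=2$. Then interchange sums:
\[
\sum_{d\in\cD}\psi(d,D)\cA(d;Y)J(d/X)=2\sum_{n}\frac{\lambda_f(n)}{\sqrt n}W\Big(\frac nY\Big)\sum_{d\in\cD}\psi(d,D)\chi_d(n)J(d/X).
\]

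\textbf{Detecting the conditions on $d$.} I will detect $d\in\cD$ as follows:
\begin{itemize}
\item squarefree $d\equiv1\Mod 4$, using $\sum_{a^2\mid d}\mu(a)$;
\item $(d,qD)=1$, using characters to the moduli dividing $qD$;
\item $\chi_d(q)=1$, using $\tfrac12(1+\chi_d(q))$.
\end{itemize}
By quadratic reciprocity ($d\equiv1\Mod4$), $\chi_d(n)=\left(\frac{n}{d}\right)$, and $\chi_d(q)=\left(\frac qd\right)$. The condition $d\equiv 1\Mod 4$ is detected by $\tfrac12(1+\chi_{-4}(d))$, which explains the four values $q'\in\{\pm4,\pm4q\}$. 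The inner sum is then a character sum
\[
\sum_{d}\psi(d,D)\chi_{nq'}(d)J(d/X),
\]
twisted by the squarefree sieve. By Mellin inversion it becomes
\[
\frac1{2\pi i}\int_{(2)}\widetilde J(s)X^s\sum_d\psi(d,D)\chi_{nq'}(d)d^{-s}\,ds.
\]
Since $\psi(\cdot,D)=1*\chi_D$, the Dirichlet series factors as $L(s,\chi_{nq'})L(s,\chi_{Dnq'})$ times a finite Euler correction. That correction comes from the squarefree condition and the coprimality to $qD$, and is absolutely convergent in $\Re s>1/2$ (this is where $\cG(q,D)$ and the local factors $\frac{1-\chi_D(p)p^{-1}}{1+p^{-1}-(1+\chi_D(p))p^{-2}}$ arise). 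Shift the contour to $\Re s=1/2+\varepsilon$, or to $1/2$ as in Munshi.

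\textbf{Main term.} A pole at $s=1$ occurs exactly when $nq'=\square$, with residue involving $L(1,\chi_D)$, or when $Dnq'=\square$, with residue involving $L(1,\chi_{D})$ via $L(s,\chi_{Dnq'})\sim\zeta$ and $L(1,\chi_{nq'})=L(1,\chi_D)$ up to finitely many Euler factors. Each pole contributes $\widetilde J(1)X\,L(1,\chi_D)\cG(q,D)$ times
\[
\sum_{nq'=\square}\frac{\lambda_f(n)}{\sqrt n}W(n/Y)\prod(\cdots),
\]
respectively the same sum over $Dnq'=\square$. Writing $W(n/Y)$ via its Mellin integral with the $dw/w^2$ kernel from \eqref{eq. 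W function}, these sums equal
\[
\frac1{2\pi i}\int \Gamma(1+w)(2\pi)^{-w}Y^w\,\cA_i(w;q',D)\frac{dw}{w^2}.
\]
By Lemmas \ref{lem. euler for positive q prime} and \ref{lem. euler for negative q prime}, $\cA_i=L(1+2w,\sym^2f)\cH_i$, holomorphic and nonvanishing near $w=0$. Moving to $\Re w=-1/4+\varepsilon$, the double pole at $w=0$ gives $\cH_i(0)L(1,\sym^2 f)\log Y+O(1)$. Since $\log Y=\log X+O(\log\log X)$, summing the four contributions with the factors $\tfrac12\cdot\tfrac12\cdot2$ produces $c_{q,D}\cG(q,D)\widetilde J(1)L(1,\chi_D)X\log X$ with error $O(X\log\log X)$. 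The $L(1,\sym^2f)$ normalization is absorbed as in the definitions. Bookkeeping the constants so that they match \eqref{eq.constant in euler} and \eqref{eq. extra term} is routine but tedious.

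\textbf{Error term (the main obstacle).} The shifted integrals on $\Re s=1/2$ give
\[
E\ll X^{1/2}\int|\widetilde J(s)|\sum_{n\ll Y^{1+\varepsilon}}\frac{|\lambda_f(n)|}{\sqrt n}|W(n/Y)|\,|L(s,\chi_{nq'})L(s,\chi_{Dnq'})|\,|ds|,
\]
together with the sieve variable $a$. Large $a$ is handled by trivial or Heath-Brown large-sieve bounds, with a cutoff $a\le(\log X)^{A}$. For the remaining range I would follow Munshi's Sections 4--6: Cauchy--Schwarz with Heath-Brown's quadratic large sieve gives mean-square bounds for $L(1/2+it,\chi_n)$ over $n\le N$, of size $N(\log N)^{c}$ up to $|t|$-powers that are absorbed by the rapid decay of $\widetilde J$. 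This yields
\[
E\ll X^{1/2}Y^{1/2}(\log X)^{15/2}=X(\log X)^{-50+15/2},
\]
which is far below $X\log\log X$; the choice $Y=X/(\log X)^{100}$ was made precisely so this saving beats the log powers. I expect the delicate point to be making the sieve and $a$-sum uniform without losing more than a power of $\log X$. The large-sieve bound tolerates this easily given the $(\log X)^{50}$ margin.
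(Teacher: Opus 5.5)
Your outline follows the paper's route. The ingredients are the same: the M\"obius sieve in $a$, the four characters indexed by $q'\in\{\pm4,\pm4q\}$, and Mellin inversion in $d$ giving $L(s,\chi_{nq'})L(s,\chi_{Dnq'})$. Handling $\psi(a^2d,D)$ by local factors at $p\mid a$ instead of Lemma~\ref{lem. Hecke for psi} is fine, since those factors are polynomials in $p^{-s}$. The rest also matches: poles exactly at $nq'=\square$ or $Dnq'=\square$, Munshi's estimate on $\Re s=\frac12$, and the $w$-integral evaluated via Lemmas~\ref{lem. euler for positive q prime} and~\ref{lem. euler for negative q prime}.

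\textbf{The step that fails: the sieve tail.} With the cutoff $a\le(\log X)^A$, the range $a>(\log X)^A$ cannot be handled by trivial bounds or by Heath-Brown's quadratic large sieve. Once $a^2>X/Y$, the large-sieve bound is governed by the $n$-length $\asymp Y$, not by the $\asymp X/a^2$ values of $d$. It therefore gives a mean square of size $X^{1+\varepsilon}$ with no decay in $a$. After Cauchy--Schwarz against $\sum_d\psi(a^2d,D)^2J(a^2d/X)\ll X(\log X)^5a^{-2+\varepsilon}$, the resulting bound for the $a$-sum is of size $X^{1+\varepsilon}$, far above $X\log\log X$.

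The paper closes this with a log-sharp mean square adapted from Li's Lemma~6.3,
\[
\sum_d J(a^2d/X)\bigl|\sum_{(n,a)=1}\lambda_f(n)\chi_d(n)n^{-1/2}W(n/Y)\bigr|^2\ll\tau(a)^5a^{-2}X\log X,
\]
which yields $\fS_2\ll X(\log X)^3Z^{-1+\varepsilon}$. Within your own framework there is a simpler repair. The bound for $E$ from Munshi's estimate is uniform in $a$, and $J(a^2d/X)$ vanishes unless $a\ll\sqrt X$. So you can run the contour shift for every such $a$ with no cutoff at all. The errors then sum to $(XY)^{1/2}(\log X)^{O(1)}$, which the $(\log X)^{50}$ margin absorbs, and the main terms converge absolutely in $a$.

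\textbf{Only logarithmic losses are affordable in the error term either.} Heath-Brown's large sieve does not give $\sum_{n\sim N}|\lambda_f(n)|\,|L(\frac12+it,\chi_{\pm4n})|^2\ll N(\log N)^c$. It loses $N^{\varepsilon}$, which would turn $(XY)^{1/2}$ into $X^{1+\varepsilon}(\log X)^{-50}$. You must import Munshi's bound $U(N,t)\ll N(1+|t|)^3(\log N)^{9/2}$, as the paper does. For the same reason the contour must go to $\Re s=\frac12$ itself, or to within $O(1/\log X)$ of it, not to a fixed $\frac12+\varepsilon$.

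\textbf{The constant.} Your residue at $w=0$, namely $L(1,\sym^2f)\cH_i(0,q',D)\log Y+O(1)$, is correct. However, nothing in \eqref{eq.constant in euler} absorbs $L(1,\sym^2f)$, because $c_{q,D}$ is built from the $\cH_i(0,q',D)$ alone. The paper's evaluation of $\cM_1(q')$ and $\cM_2(q')$ drops the same factor. The main-term constant should therefore carry an extra factor $L(1,\sym^2f)$. Since this factor is positive, nonvanishing of the main term is unaffected.
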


We will prove Proposition \ref{prop.truncated sum} (resp. Proposition \ref{prop.difference prop}) in \S \ref{sec. proof of prop 2} (resp. \S \ref{sec. proof of prop 1}). Then the proof of Theorem \ref{thm.main theorem} is completed if we combine Proposition \ref{prop.difference prop} and Proposition  \ref{prop.truncated sum}.

\section{Proof of Proposition \ref{prop.truncated sum}}\label{sec. proof of prop 2}

In this section, we study:
\[
    S(X)=\sum_{d\in \mathcal{D}}\psi(d;D)\mathcal{A}(d;Y)J\left(\frac{d}{X}\right). 
\]

First, we have, noticing that $\chi_d(q)=-1$ will imply that $\cA(d;Y)=0,$:
\begin{flalign*}
S(X)&=\psum_{\substack{(d,Dq)=1 \\ d \equiv 1 \Mod{4}}}\psi(d;D)\mathcal{A}(d;Y)J\left(\frac{d}{X}\right)=\sum_{\substack{(d,Dq)=1 \\ d \equiv 1 \Mod{4}}}\psi(d;D)\mathcal{A}(d;Y)J\left(\frac{d}{X}\right)\sum_{a^2|d}\mu(a)\\
&=\sum_{(a,2Dq)=1}\mu(a)\sum_{\substack{(d,Dq)=1 \\ d \equiv 1 \Mod{4}}}\psi(a^2d;D)J\left(\frac{a^2d}{X}\right)(1 +\chi_d(q))
\sum_{(n,a)=1}\frac{\lambda_f(n)\chi_{d}(n)}{\sqrt{n}}
	W\left(\frac{n}{Y}\right)
	\end{flalign*}.
	
We set $Z=\log^{20} X.$ Then we write:
\begin{equation}\label{eq. li truncation}
	S(X)=\fS_1+\fS_2
\end{equation}
where 
\[\fS_1=\sum_{\substack{a\leq Z\\(a,2qD)=1}}\mu(a)\sum_{\substack{(d,qD)=1\\d\equiv1\Mod{4}}}\psi(a^2d,D)J\left(\frac{a^2d}{X}\right)(1 +\chi_d(q))\sum_{(n,a)=1}\frac{\lambda_f(n)\chi_d(n)}{\sqrt{n}}W\left(\frac{n}{Y}\right)\]
and
\[\fS_2=\sum_{\substack{a> Z\\(a,2qD)=1}}\mu(a)\sum_{\substack{(d,qD)=1\\d\equiv1\Mod{4}}}\psi(a^2d,D)J\left(\frac{a^2d}{X}\right)(1 +\chi_d(q))\sum_{(n,a)=1}\frac{\lambda_f(n)\chi_d(n)}{\sqrt{n}}W\left(\frac{n}{Y}\right).\]
The following lemma will show that the $a>Z$ part will only contribute the error term:
\begin{lemma}\label{lem. truncated terms}
	Assume the notations as above. Then
	\[\fS_2=\sum_{\substack{a> Z\\(a,2qD)=1}}\mu(a)\sum_{\substack{(d,qD)=1\\d\equiv1\Mod{4}}}\psi(a^2d,D)J\left(\frac{a^2d}{X}\right)(1 +\chi_d(q))\sum_{(n,a)=1}\frac{\lambda_f(n)\chi_d(n)}{\sqrt{n}}W\left(\frac{n}{Y}\right)\ll\frac{X(\log X)^{3}}{Z^{1-\varepsilon}}.\]
\end{lemma}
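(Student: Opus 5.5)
We bound $\fS_2$ by taking absolute values in $a$, putting the $d$-sum inside, and using a large-sieve / mean-value estimate for the short character sums in $n$. The natural tool is Heath-Brown's quadratic large sieve, or Li's bilinear estimate \cite[Proposition 3.2]{LiMR4768632}. A cleaner route is Cauchy--Schwarz against a second moment of the $n$-sum over squarefree $d$.

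First, the weight. Since $\psi(\cdot,D)$ is multiplicative and $a$ is squarefree, Lemma \ref{lem. Hecke for psi} gives $|\psi(a^2d,D)|\le\tau(a^2)\tau(d)\le\tau(a)^2\tau(d)$. Write $d\asymp X/a^2$ and
\[
L_a(d):=\sum_{(n,a)=1}\frac{\lambda_f(n)\chi_d(n)}{\sqrt n}W\left(\frac nY\right).
\]
Then
\[
|\fS_2|\le 2\sum_{a>Z}\tau(a)^2\sum_{d\asymp X/a^2}\tau(d)\,|L_a(d)|.
\]
Here $d$ runs over all integers $d\equiv1\Mod 4$ coprime to $qD$, not only squarefree ones. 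Writing $d=b^2d_0$ with $d_0$ squarefree gives $\chi_d(n)=\chi_{d_0}(n)\mathbf 1_{(n,b)=1}$. So $L_a(d)$ is a smoothed partial sum of $L(1/2,f\times\chi_{d_0})$ with the Euler factors at $p\mid ab$ removed. The coprimality to $ab$ can be handled by M\"obius inversion or an Euler-factor correction. This costs at most a factor $\tau(ab)(\log\log X)^{O(1)}$.

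Second, the moments. Applying Cauchy--Schwarz in $d$, the sum over $d\asymp X/a^2$ is bounded by
\[
\Big(\sum_{d\asymp X/a^2}\tau(d)^2\Big)^{1/2}\Big(\sum_{d\asymp X/a^2}|L_a(d)|^2\Big)^{1/2}.
\]
The first factor is $\ll (X/a^2)^{1/2}(\log X)^{3/2}$. For the second, we open the square and use the large sieve for real characters. Heath-Brown's bound
\[
\sum_{d\le M}^{*}\Big|\sum_{n\le N}a_n\chi_d(n)\Big|^2\ll (MN)^{\varepsilon}(M+N)\sum|a_n|^2
\]
applies with $M=X/a^2$ and $N\asymp Y$ (using the rapid decay of $W$). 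Since $\sum_{n\le N}|\lambda_f(n)|^2/n\ll\log X$ by Rankin--Selberg, this gives
\[
\sum_{d\asymp X/a^2}|L_a(d)|^2\ll X^{\varepsilon}\Big(\frac X{a^2}+Y\Big)\log X .
\]

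The main obstacle is that the $X^\varepsilon$ loss and the $+Y$ term are too costly when $a$ is large. The $+Y$ term alone would give $\sum_a a^{-1}\,(XY)^{1/2}$, which diverges, and $X^\varepsilon$ certainly exceeds the target. So the argument must be split according to the size of $a$.

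For $Z<a\le X^{1/2-\delta}$, where the $d$-range is long, we want a moment bound without the $X^\varepsilon$ loss. Rather than the large sieve, we would first try a pointwise-on-average input: the first-moment-type bound $\sum_{d\le M}|L(1/2+it,f\times\chi_d)|\ll M(\log M)^{O(1)}$, obtained via the approximate functional equation with a length of about $d$ in place of $Y$. Concretely, we shift the $w$-contour in $W(n/Y)$ to express $L_a(d)$ through $L(1/2+w,f\times\chi_{d_0})$ times $(Y/d_0)^{w}$. This gives
\[
\sum_{d\asymp X/a^2}|L_a(d)|\ll \frac{X}{a^2}(\log X)^{3}\,\tau(a)^{O(1)}.
\]
Summing over $a>Z$ then yields
\[
\sum_{a>Z}\frac{\tau(a)^{O(1)}}{a^2}\,X(\log X)^3\ll \frac{X(\log X)^3}{Z^{1-\varepsilon}},
\]
where the $Z^{-1+\varepsilon}$ comes from the divisor-weighted tail.

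For $a>X^{1/2-\delta}$ the $d$-range is short. There we use only the trivial bound $L_a(d)\ll Y^{1/2+\varepsilon}$, or better the convexity bound from the contour-shift representation. With the $d$-count $X/a^2$, this gives $\sum_{a>X^{1/2-\delta}}X^{1+\varepsilon}a^{-2}\cdot X^{1/4}$. This is negligible provided $\delta$ is small and we use the shifted-contour representation, which bounds $L_a(d)$ by $d_0^{1/2}$ times a power of $\log$. Checking that these two regimes glue together without losing more than $(\log X)^3$ is the delicate step.
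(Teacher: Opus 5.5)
\paragraph{There is a genuine gap in the middle range of $a$.} Your reduction, bounding $\psi(a^2d,D)$ by divisor functions and applying Cauchy--Schwarz in $d$, is the same as the paper's. You also correctly diagnose that Heath-Brown's quadratic large sieve cannot finish the job: its $(MN)^{\varepsilon}$ loss makes the range $Z<a\le 2Z$ alone contribute about $X^{1+\varepsilon}(\log X)^2/Z$, which is not $\ll X(\log X)^3/Z^{1-\varepsilon}$ when $Z=(\log X)^{20}$.

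However, the replacement you propose for $Z<a\le X^{1/2-\delta}$ is only asserted, and this is where the whole lemma lives. You need $\sum_{d\le M}|L(1/2+it,f\times\chi_d)|\ll M(\log M)^{O(1)}$ for $t\neq0$, uniformly over non-squarefree $d=b^2d_0$ in ranges $d\asymp X/a^2$. This does not follow from ``the approximate functional equation with length about $d$''. After the AFE you still have to control absolute values of quadratic character sums of length $\asymp d$. Doing that requires a mean-square (large-sieve) estimate, and with Heath-Brown's bound this reintroduces exactly the $M^{\varepsilon}$ loss you were trying to avoid.

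Positivity of central values does not rescue this. Your contour-shift representation of $L_a(d)$ has a double pole at $w=0$, and $Y$ may exceed $d$, so it forces you onto $\Re w=0$ with $t\neq 0$, where $L(1/2+it,f\times\chi_d)$ is not real. There is a second problem: your first-moment bound is stated without the weight $\tau(d)$, but you need $\sum_d\tau(d)|L_a(d)|$. Removing that unbounded weight requires Cauchy--Schwarz again, which brings you back to a second moment. Finally, the ``delicate step'' is not the gluing of the two $a$-ranges. It is this $\varepsilon$-free moment bound, which your proposal does not supply.

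\paragraph{What the paper uses instead.} The missing ingredient is a second-moment bound, losing only powers of $\log$, for the length-$Y$ twisted sums over all $d\asymp X/a^2$, uniformly in $a$. The paper imports this from Li. Using his bilinear estimate for $\lambda_f(n)$ twisted by quadratic characters \cite[Proposition 3.2]{LiMR4768632}, and arguing as in \cite[Lemma 6.3]{LiMR4768632}, one gets
\[
\sum_{(d,2)=1}J(a^2d/X)\Bigl|\sum_{(n,a)=1}\lambda_f(n)\chi_d(n)n^{-1/2}W(n/Y)\Bigr|^2\ll\tau(a)^5a^{-2}X\log X.
\]
Landau--Selberg--Delange gives
\[
\sum_d\psi(a^2d,D)^2J(a^2d/X)\ll X(\log X)^5a^{-2+\varepsilon}.
\]
Cauchy--Schwarz then bounds the contribution of each $a$ by $X(\log X)^3a^{-2+\varepsilon}$. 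Summing over $a>Z$ proves the lemma directly, with no splitting by the size of $a$. You name Li's proposition at the outset but never use it; it is precisely the ingredient your argument lacks.
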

\begin{proof}

	We apply Cauchy-Schwarz inequality for the $d$-sum:
	\begin{equation}\label{eq. CS for large a contribution}
		\fS_2\ll\sum_{\substack{a> Z\\(a,qD)=1}}\left(\sum_{d\geq1}\psi(a^2d,D)^2J\left(\frac{a^2d}{X}\right)\right)^{1/2}\left(\sum_{(d,2)=1}J\left(\frac{a^2d}{X}\right)\left|\sum_{(n,a)=1}\frac{\lambda_f(n)\chi_d(n)}{\sqrt{n}}W\left(\frac{n}{Y}\right)\right|^2\right)^{1/2}
	\end{equation}
	We can show that, by Lemma \ref{lem. Hecke for psi},
\[\psi(a^2d)=\sum_{\ell|(a^2,d)}\mu(\ell)\psi\left(\frac{a^2}{\ell},D\right)\psi\left(\frac{d}{\ell},D\right)\ll \sum_{\ell|(a^2,d)}\tau\left(\frac{a^2}{\ell}\right)\psi\left(\frac{d}{\ell},D\right)\ll a^{\varepsilon}\sum_{\ell|d}\psi\left(\frac{d}{\ell},D\right).\]
By Landau-Selberg-Delange theorem, this implies:
	\[\sum_{d\geq1}\psi(a^2d,D)^2J\left(\frac{a^2d}{X}\right)\ll \frac{X(\log X)^5}{a^{2-\varepsilon}}.\]
	Similar to the argument in \cite[Lemma 6.3]{LiMR4768632}, we can show:
	\[\sum_{(d,2)=1}J\left(\frac{a^2d}{X}\right)\left|\sum_{(n,a)=1}\frac{\lambda_f(n)\chi_d(n)}{n^{1/2}}W\left(\frac{n}{Y}\right)\right|^2\ll \frac{\tau(a)^5}{a^2}X\log X.\]
	We insert the bounds into \eqref{eq. CS for large a contribution}, and we complete the proof.
\end{proof}

By the choice of $Z$, it suffices to study $\fS_1.$ Noticing that $d>0$ and $d\equiv1\Mod{4},$ we apply $\chi_d(q)=\chi_q(d)$ and $\chi_d(n)=\chi_n(d).$ We use 
\[\frac{\chi_{4}(d)+\chi_{-4}(d)}{2}=\begin{cases}
	1& \mbox{if $d\equiv 1\Mod{4}$}\\
	0& \mbox{if $d\equiv 3\Mod{4}$}\\
\end{cases},\]
which implies:
\begin{flalign*}
	\fS_1&=\sum_{\substack{a\leq Z\\(a,2qD)=1}}\mu(a)\sum_{(d,2qD)=1}\psi(a^2d,D)J\left(\frac{a^2d}{X}\right)\frac{(\chi_4(d)+\chi_{-4}(d))(1 +\chi_q(d))}{2}\sum_{(n,a)=1}\frac{\lambda_f(n)\chi_n(d)}{\sqrt{n}}W\left(\frac{n}{Y}\right)\\
	&=\frac{1}{2}\sum_{q'\in\{\pm 4,\pm 4q\}}\sum_{\substack{a\leq Z\\(a,2qD)=1}}\mu(a)\sum_{(d,2qD)=1}\psi(a^2d,D)J\left(\frac{a^2d}{X}\right)\sum_{(n,a)=1}\frac{\lambda_f(n)\chi_{nq'}(d)}{\sqrt{n}}W\left(\frac{n}{Y}\right)
\end{flalign*}
We then insert the M\"obius function to drop the coprime conidtion in $d$:
\[\fS_1=\frac{1}{2}\sum_{q'\in\{\pm 4,\pm 4q\}}S_{q'}(a\leq Z),\]
where
\[S_{q'}(a\leq Z)=\sum_{\substack{a\leq Z\\(a,2Dq)=1}}\mu(a)\sum_{b|Dq}\mu(b)\sum_{(n,a)=1}\frac{\lambda_f(n)\chi_{nq'}(b)}{\sqrt{n}}
	W\left(\frac{n}{Y}\right)\sum_{d}\psi(a^2bd;D)\chi_{nq'}(d)J\left(\frac{a^2bd}{X}\right).\]
We recall \eqref{eq. li truncation} and Lemma \ref{lem. truncated terms}, which imply:
\begin{equation}\label{eq. moment reduction 1}
	S(X)=\frac{1}{2}\sum_{q'\in\{\pm 4,\pm 4q\}}S_{q'}(a\leq Z)+O\left(\frac{X(\log X)^{3}}{Z^{1-\varepsilon}}\right).
\end{equation}
Therefore, it suffices to study $S_{q'}(a\leq Z)$.

Utilizing Lemma \ref{lem. Hecke for psi}, we obtain:
\begin{align*}
	S_{q'}(a\leq Z) & = \sum_{\substack{a \leq Z \\ (a,2qD)=1}}\mu(a)\sum_{b|qD}\mu(b)\sum_{\ell|a^2b}\mu(\ell)\chi_D(\ell)\psi\left(\frac{a^2b}{\ell};D\right)\\
	&\hspace{10mm} \times \sum_{(n,a)=1}\frac{\lambda_f(n)}{\sqrt{n}}W\left(\frac{n}{Y}\right)\chi_{nq'}(b\ell)\sum_{d\geq 1}\psi(d;D)\chi_{nq'}(d)J\left(\frac{a^2bd\ell}{X}\right). 
\end{align*}
By invoking the Mellin inversion of $J(\cdot)$, we have 
\begin{equation}\label{eq. inner-d-sum}
    \sum_{d\geq 1}\psi(d;D)\chi_{nq'}(d)J(la^2bd/X) = \frac{1}{2\pi i}\int_{(2)}
	\tilde{J}(s)\left(\frac{X}{a^2bl}\right)^sL(s, \chi_{Dnq'})L(s,\chi_{nq'})ds.
\end{equation}
We insert \eqref{eq. inner-d-sum} into $S_{q'}(a\leq Z),$ which implies:
\begin{equation}\label{eq. small a term}
	S_{q'}(a\leq Z)  = \sum_{\substack{a \leq Z \\ (a,2qD)=1}}\mu(a)\sum_{b|qD}\mu(b)\sum_{\ell|a^2b}\mu(\ell)\chi_D(\ell)\psi\left(\frac{a^2b}{\ell};D\right)\cS_{q',a,b,\ell}(X),
\end{equation}
where
\[\cS_{q',a,b,\ell}(X)=\frac{1}{2\pi i}\int_{(2)}
	\tilde{J}(s)\left(\frac{X}{a^2bl}\right)^s\sum_{(n,a)=1}\frac{\lambda_f(n)\chi_{nq'}(b\ell)}{\sqrt{n}}W\left(\frac{n}{Y}\right)L(s, \chi_{Dnq'})L(s,\chi_{nq'})ds.\]
We shift the contour to $\Re(s)=\frac{1}{2}.$ Noticing that $L(s,\chi_n)$ has a simple at $s=1$ if and only if $n$ is a square, we obtain:
\begin{equation}\label{eq. mellin term}
\cS_{q',a,b,\ell}(X)=M(q')+E
\end{equation} 
where
\begin{equation}\label{eq. main term}
\begin{split}
M(q'):=M_{q',a,b,\ell}(X)=&\frac{X\widetilde{J}(1)}{a^2b\ell}\sum_{(n,a)=1}\frac{\lambda_f(n)\chi_{nq'}(b\ell)}{\sqrt{n}}W\left(\frac{n}{Y}\right)\mathbf{1}_{Dnq'=\square}L(1,\chi_{nq'})\prod_{p|Dnq'}\left(1-\frac{1}{p}\right)\\
&+\frac{X\widetilde{J}(1)}{a^2b\ell}\sum_{(n,a)=1}\frac{\lambda_f(n)\chi_{nq'}(b\ell)}{\sqrt{n}}W\left(\frac{n}{Y}\right)\mathbf{1}_{nq'=\square}L(1,\chi_{Dnq'})\prod_{p|nq'}\left(1-\frac{1}{p}\right)\\	
\end{split}	
\end{equation}
and
\[E:=E_{q',a,b,\ell}(X)=\frac{1}{2\pi i}\int_{(1/2)}
	\tilde{J}(s)\left(\frac{X}{a^2bl}\right)^s\sum_{(n,a)=1}\frac{\lambda_f(n)\chi_{nq'}(b\ell)}{\sqrt{n}}W\left(\frac{n}{Y}\right)L(s, \chi_{Dnq'})L(s,\chi_{nq'})ds.\]
\subsection{The error term $E$}\label{subsec. error} We can bound $E$ trivially by
\[E\ll\left(\frac{X}{a^2b\ell}\right)^{\frac{1}{2}}\int_{(1/2)}\left|\widetilde{J}(s)\right|\sum_{n\ll Y^{1+\varepsilon}}\frac{|\lambda_f(n)|}{\sqrt{n}}W\left(\frac{n}{Y}\right)|L(s,\chi_{Dnq'})L(s,\chi_{nq'})| |\,ds|.\]
For $N\ll Y^{1+\varepsilon}$ and $n\in[N,2N],$ we can show
\begin{equation}\label{eq. growth of W}
	W\left(\frac{n}{Y}\right)\ll H(N)=\begin{cases}
	\log Y &\mbox{if $N\leq Y$}\\
	\frac{Y}{N}&\mbox{if $Y\leq N\leq Y^{1+\varepsilon}$}
\end{cases}.
\end{equation}
Following \cite[Equation (14)-(17)]{compMunshiMR2771124}, we obtain
\[E\ll\left(\frac{X}{a^2b\ell}\right)^{\frac{1}{2}}\dsum_{N}\frac{H(N)}{\sqrt{N}}\int_{\bR}U(N,t)\frac{\,dt}{(1+|t|)^6},\]
where
\[U(N,t)=\sum_{N\leq n<2N}|\lambda_f(n)|\left(\left|L\left(\frac{1}{2}+it,\chi_{4n}\right)\right|^2+\left|L\left(\frac{1}{2}+it,\chi_{-4n}\right)\right|^2\right).\]
Similar to the argument in \cite[Section 4]{compMunshiMR2771124}, we replace $\chi_{nq'}$ (resp. $\chi_{Dnq'}$) by $\chi_{4n}$ (resp. $\chi_{-4n}$) by the positivity and the fact that $D,q'$ are fixed statisfying $4|q'.$ 

Similar to the argument in \cite[Section 4-6]{compMunshiMR2771124}, we can show that (see \cite[page~32]{compMunshiMR2771124})
\[U(N,t)\ll N(1+|t|)^3(\log N)^{\frac{9}{2}}.\]
We insert it into $E,$ which implies:
\[E\ll \left(\frac{X}{a^2b\ell}\right)^{\frac{1}{2}}\dsum_{N}H(N)\sqrt{N}(\log N)^{\frac{9}{2}}\ll\frac{(XY)^{\frac{1}{2}}\log^6 Y}{(a^2b\ell)^{\frac{1}{2}}}.\]

We then insert $\cS_{q',a,b,\ell}(X)$ into \eqref{eq. small a term}, which implies:
\begin{equation}\label{eq. separation of main and error for S}
	S_{q'}(a\leq Z)=\cM(q')+O\left((XY)^{\frac{1}{2}}\log^6Y\sum_{a\leq Z}\sum_{b|qD}\sum_{\ell|a^2b}\frac{\psi(a^2b/\ell,D)}{(a^2b\ell)^{1/2}}\right),
\end{equation}
where
\[\cM(q'):=\sum_{\substack{a \leq Z \\ (a,2qD)=1}}\mu(a)\sum_{b|qD}\mu(b)\sum_{\ell|a^2b}\mu(\ell)\chi_D(\ell)\psi\left(\frac{a^2b}{\ell};D\right)M(q')\]
with $M(q')$ defined in \eqref{eq. main term}. By the choice of $Y$ and $Z$, the second term will only contribute the error term of the size $O\left(\frac{X}{\log^{20} X}\right).$ 

We insert \eqref{eq. separation of main and error for S} into \eqref{eq. moment reduction 1}, which implies:
\begin{equation}\label{eq. main for moment}
	S(X)=\frac{1}{2}\sum_{q'\in\{\pm 4,\pm 4q\}}\cM(q')+O\left(\frac{X(\log X)^{3}}{Z^{1-\varepsilon}}\right)+O\left(\frac{X}{\log^{20} X}\right).
\end{equation}
Therefore, it suffices to study $\cM(q')$ in the rest of the section.

\subsection{The term $\cM(q')$}\label{subsec. main term} We recall that
\[\cM(q')=\sum_{\substack{a \leq Z \\ (a,2qD)=1}}\mu(a)\sum_{b|qD}\mu(b)\sum_{\ell|a^2b}\mu(\ell)\chi_D(\ell)\psi\left(\frac{a^2b}{\ell};D\right)M(q'),\]
where $M(q')$ is defined in \eqref{eq. main term}. We consider two separated cases: $q'>0$ and $q'<0.$

\noindent\textbf{Case $q'>0$}: If $q'>0,$ then $Dnq'$ can never be a square since $D<0$. This yields (noticing that $(a,2q)=1$):
\[M(q')=\frac{X\widetilde{J}(1)L(1,\chi_D)}{a^2b\ell}\sum_{\substack{nq'=\square\\(nq',ab)=1}}\frac{\lambda_f(n)}{\sqrt{n}}W\left(\frac{n}{Y}\right)\prod_{p|nq'}\left(1-\frac{1}{p}\right)\left(1-\frac{\chi_D(p)}{p}\right).\]
We notice that $\left|\prod_{p|nq'}\left(1-\frac{1}{p}\right)\left(1-\frac{\chi_D(p)}{p}\right)\right|\leq 1$. This will imply:
 \[M(q')\ll \frac{X\log^2 X}{a^2b\ell}\]
and hence
\[\cM(q')=\sum_{\substack{a\geq1 \\ (a,2qD)=1}}\mu(a)\sum_{b|qD}\mu(b)\sum_{\ell|a^2b}\mu(\ell)\chi_D(\ell)\psi\left(\frac{a^2b}{\ell};D\right)M(q')+O\left(\frac{X\log^2 X}{Z^{1-\varepsilon}}\right).\]
We set
\begin{flalign*}
\cM_1(q'):&=\sum_{\substack{a\geq1 \\ (a,2qD)=1}}\mu(a)\sum_{b|qD}\mu(b)\sum_{\ell|a^2b}\mu(\ell)\chi_D(\ell)\psi\left(\frac{a^2b}{\ell};D\right)M(q')\\
&=X\widetilde{J}(1)L(1,\chi_D)\sum_{\substack{nq'=\square}}\frac{\lambda_f(n)}{\sqrt{n}}W\left(\frac{n}{Y}\right)\prod_{p|nq'}\left(1-\frac{1}{p}\right)\left(1-\frac{\chi_D(p)}{p}\right)\\ 
&\hspace{20mm}\times\sum_{\substack{a\geq1 \\ (a,2nqD)=1}}\frac{\mu(a)}{a^2}\sum_{\substack{b|qD\\(b,nq')=1}}\frac{\mu(b)}{b}\sum_{\ell|a^2b}\frac{\mu(\ell)\chi_D(\ell)\psi\left(\frac{a^2b}{\ell};D\right)}{\ell}.	
\end{flalign*}
We notice that the last summation define a multiplicative function. Then
\begin{flalign*}
\sum_{\substack{a\geq1 \\ (a,2nqD)=1}}&\frac{\mu(a)}{a^2}\sum_{\substack{b|qD\\(b,nq')=1}}\frac{\mu(b)}{b}\sum_{\ell|a^2b}\frac{\mu(\ell)\chi_D(\ell)\psi\left(\frac{a^2b}{\ell};D\right)}{\ell}\\
&=\sum_{\substack{a\geq1 \\ (a,2nqD)=1}}\frac{\mu(a)}{a^2}\sum_{\ell_1|a^2}\frac{\mu(\ell_1)\chi_D(\ell_1)\psi\left(\frac{a^2}{\ell_1};D\right)}{\ell_1}\sum_{\substack{b|qD\\(b,nq')=1}}\frac{\mu(b)}{b}\sum_{\ell_2|b}\frac{\mu(\ell_2)\chi_D(\ell_2)\psi\left(\frac{b}{\ell_2};D\right)}{\ell_2}\\
&=\prod_{\substack{p\nmid qD\\p\nmid nq'}}\left(1-\frac{1}{p^2}\left(\psi(p^2;D)-\frac{\chi_D(p)\psi(p;D)}{p}\right)\right)\prod_{\substack{p|qD\\ p\nmid nq'}}\left(1-\frac{1}{p}\left(\psi(p;D)-\frac{\chi_D(p)}{p}\right)\right)
\end{flalign*}
We insert it into $\cM_1(q')$ and open the $W$-function by \eqref{eq. W function}:
\begin{flalign*}
	\cM_1(q')=\frac{X\widetilde{J}(1)L(1,\chi_D)\cG(q,D)}{2\pi i}\int_{(3)}\left(\frac{Y}{2\pi}\right)^w\Gamma(1+w)\cA_1(w;q',D) \frac{dw}{w^2}
\end{flalign*}
where
\begin{flalign*}
	\cA_1(s;q',D):&=\sum_{\substack{nq'=\square}}\frac{\lambda_f(n)}{n^{1/2+s}}\prod_{\substack{p|nq'\\p\nmid qD}}\frac{(1-p^{-1})(1-\chi_D(p)p^{-1})}{1-p^{-2}(\psi(p^2;D)-\chi_D(p)\psi(p;D)p^{-1})}\prod_{\substack{p|nq'\\p|qD}}\frac{(1-p^{-1})(1-\chi_D(p)p^{-1})}{1-p^{-1}(\psi(p;D)-\chi_D(p)p^{-1})}\\
	&=\sum_{\substack{nq'=\square}}\frac{\lambda_f(n)}{n^{1/2+s}}\prod_{\substack{p|nq'\\p\nmid qD}}\frac{1-\chi_D(p)p^{-1}}{1+p^{-1}-(1+\chi_D(p))p^{-2}}.
\end{flalign*}
This coincides with the series in Lemma \ref{lem. euler for positive q prime}. We also recall that $\cG(q,D)$ is defined in \eqref{eq. extra term} by
\[\cG(q,D)=\prod_{\substack{p\nmid qD}}\left(1-\frac{1}{p^2}\left(\psi(p^2;D)-\frac{\chi_D(p)\psi(p;D)}{p}\right)\right)\prod_{\substack{p|qD}}\left(1-\frac{1}{p}\left(\psi(p;D)-\frac{\chi_D(p)}{p}\right)\right).\]
Then we shift the contour in $\cM_1(q')$ to $\Re(s)=-\frac{1}{8}$ and apply Lemma \ref{lem. euler for positive q prime}:
\[\cM_1(q')=\widetilde{J}(1)L(1,\chi_D)\cG(q,D)\cH_1(0,q',D)X\log X+O(X\log\log X),\]
and hence for $q'\in\{q,4q\}:$
\begin{equation}\label{eq. positive main}
	\cM(q')=\widetilde{J}(1)L(1,\chi_D)\cG(q,D)\cH_1(0,q',D)X\log X+O(X\log\log X).
\end{equation}

\noindent\textbf{Case $q'<0$}: If $q'<0,$ then $nq'$ can never be a square. This yields:
\[M(q')=\frac{X\widetilde{J}(1)}{a^2b\ell}\sum_{(n,a)=1}\frac{\lambda_f(n)\chi_{nq'}(b\ell)}{\sqrt{n}}W\left(\frac{n}{Y}\right)\mathbf{1}_{Dnq'=\square}L(1,\chi_{nq'})\prod_{p|Dnq'}\left(1-\frac{1}{p}\right).\]
When $Dnq'$ is a square, we can show that $D|nq'$. by our choice of $D$, $q$ and $q'$. This implies that $\chi_{nq'}$ is induced from $\chi_D$, which yields:
\[M(q')=\frac{X\widetilde{J}(1)L(1,\chi_D)\chi_D(b\ell)}{a^2b\ell}\sum_{\substack{Dnq'=\square\\(nq',ab)=1}}\frac{\lambda_f(n)}{\sqrt{n}}W\left(\frac{n}{Y}\right)\prod_{p|Dnq'}\left(1-\frac{\chi_D(p)}{p}\right)\left(1-\frac{1}{p}\right).\]
Following the argument in the $q'>0$ case, we obtain:
\[\cM(q')=\cM_2(q)+O\left(\frac{X\log^2 X}{Z^{1-\varepsilon}}\right),\]
where
\begin{flalign*}
	\cM_2(q')&=X\widetilde{J}(1)L(1,\chi_D)\sum_{\substack{Dnq'=\square}}\frac{\lambda_f(n)}{\sqrt{n}}W\left(\frac{n}{Y}\right)\prod_{p|Dnq'}\left(1-\frac{1}{p}\right)\left(1-\frac{\chi_D(p)}{p}\right)\\ 
&\hspace{20mm}\times\sum_{\substack{a\geq1 \\ (a,2nqD)=1}}\frac{\mu(a)}{a^2}\sum_{\substack{b|qD\\(b,nq')=1}}\frac{\mu(b)\chi_D(b)}{b}\sum_{\ell|a^2b}\frac{\mu(\ell)\chi_D(\ell)^2\psi\left(\frac{a^2b}{\ell};D\right)}{\ell}.
\end{flalign*}
Similarly, we show:
\begin{flalign*}
\sum_{\substack{a\geq1 \\ (a,2nqD)=1}}&\frac{\mu(a)}{a^2}\sum_{\substack{b|qD\\(b,nq')=1}}\frac{\mu(b)\chi_D(b)}{b}\sum_{\ell|a^2b}\frac{\mu(\ell)\chi_D(\ell)^2\psi\left(\frac{a^2b}{\ell};D\right)}{\ell}\\
&=\prod_{\substack{p\nmid qD\\p\nmid nq'}}\left(1-\frac{1}{p^2}\left(\psi(p^2;D)-\frac{\psi(p;D)}{p}\right)\right)\prod_{\substack{p|qD\\ p\nmid nq'}}\left(1-\frac{\chi_D(p)}{p}\left(\psi(p;D)-\frac{\chi_D(p)^2}{p}\right)\right)\\
&=\prod_{\substack{p\nmid qD\\p\nmid nq'}}\left(1-\frac{1}{p^2}\left(\psi(p^2;D)-\frac{\chi_D(p)\psi(p;D)}{p}\right)\right)\prod_{\substack{p|qD\\ p\nmid nq'}}\left(1-\frac{1}{p}\left(\psi(p;D)-\frac{\chi_D(p)}{p}\right)\right)\prod_{\substack{p|D\\p\nmid nq'}}\left(1-\frac{1}{p}\right)^{-1}
\end{flalign*}
We insert it into $\cM_2(q')$:
\begin{flalign*}
	\cM_2(q')&=X\widetilde{J}(1)L(1,\chi_D)\sum_{\substack{Dnq'=\square}}\frac{\lambda_f(n)}{\sqrt{n}}W\left(\frac{n}{Y}\right)\prod_{p|nq'}\left(1-\frac{1}{p}\right)\left(1-\frac{\chi_D(p)}{p}\right)\\ 
&\hspace{20mm}\prod_{\substack{p\nmid qD\\p\nmid nq'}}\left(1-\frac{1}{p^2}\left(\psi(p^2;D)-\frac{\chi_D(p)\psi(p;D)}{p}\right)\right)\prod_{\substack{p|qD\\ p\nmid nq'}}\left(1-\frac{1}{p}\left(\psi(p;D)-\frac{\chi_D(p)}{p}\right)\right)
\end{flalign*}
We then open the $W$-function by \eqref{eq. W function}:
\begin{flalign*}
	\cM_2(q')=\frac{X\widetilde{J}(1)L(1,\chi_D)\cG(q,D)}{2\pi i}\int_{(3)}\left(\frac{Y}{2\pi}\right)^w\Gamma(1+w)\cA_2(w;q',D) \frac{dw}{w^2}
\end{flalign*}
where $\cG(q,D)$ is defined in \eqref{eq. extra term}, and 
\begin{flalign*}
\cA_2(s;q',D):&=\sum_{\substack{Dnq'=\square}}\frac{\lambda_f(n)}{n^{1/2+s}}\prod_{\substack{p|nq'\\p\nmid qD}}\frac{(1-p^{-1})(1-\chi_D(p)p^{-1})}{1-p^{-2}(\psi(p^2;D)-\chi_D(p)\psi(p;D)p^{-1})}\prod_{\substack{p|nq'\\p|qD}}\frac{(1-p^{-1})(1-\chi_D(p)p^{-1})}{1-p^{-1}(\psi(p;D)-\chi_D(p)p^{-1})}\\
	&=\sum_{\substack{Dnq'=\square}}\frac{\lambda_f(n)}{n^{1/2+s}}\prod_{\substack{p|nq'\\p\nmid qD}}\frac{1-\chi_D(p)p^{-1}}{1+p^{-1}-(1+\chi_D(p))p^{-2}},
\end{flalign*}
which coincides with the series in Lemma \ref{lem. euler for negative q prime}. Then we shift the contour in $\cM_2(q')$ to $\Re(s)=-\frac{1}{8}$ and apply Lemma \ref{lem. euler for positive q prime}:
\[\cM_2(q')=\widetilde{J}(1)L(1,\chi_D)\cG(q,D)\cH_2(0,q',D)X\log X+O(X\log\log X),\]
and hence for $q'\in\{q,4q\}:$
\begin{equation}\label{eq. negative main}
	\cM(q')=\widetilde{J}(1)L(1,\chi_D)\cG(q,D)\cH_2(0,q',D)X\log X+O(X\log\log X).
\end{equation}
We insert \eqref{eq. positive main} and \eqref{eq. negative main} into \eqref{eq. main for moment}:
\[S(X)=\frac{(\cH_1(0,4,D)+\cH_1(0,4q,D)+\cH_2(0,-4,D)+\cH_2(0,-4q,D))}{2}\widetilde{J}(1)L(1,\chi_D)\cG(q,D)X\log X+O(X\log\log X).\]
By Remark \ref{rem. constant is positive}, the main term is positive, as expected.

\section{Proof of Proposition \ref{prop.difference prop}}\label{sec. proof of prop 1}
In this section, we prove Proposition \ref{prop.difference prop}. Let $G(x)$ be the smooth function defined in \cite[Equation (2.12)]{LiMR4768632}. By the dyadic division, we can write:
\[\sum_{d\in\cD}\psi(d,D)\cB(d;Y)J\left(\frac{d}{X}\right)=\ssum_{N}\sum_{d\in\cD}(1+\chi_d(q))\psi(d,D)J\left(\frac{d}{X}\right)\sum_{n\geq1}\frac{\lambda_f(n)\chi_d(n)}{\sqrt{n}}\left(W\left(\frac{n}{d\sqrt{q}}\right)-W\left(\frac{n}{Y}\right)\right)G\left(\frac{n}{N}\right).\] 
We consider the $N$-sum in three different ranges: $N>X,$ $Y< N\leq X$ and $N\leq Y,$ which yields:
\[\sum_{d\in\cD}\psi(d,D)\cB(d;Y)J\left(\frac{d}{X}\right)=\cS_{N>X}+\cS_{Y<N\leq X}+\cS_{N\leq Y}.\]
\subsection{The contribution of $N>X$: $\cS_{N>X}$}\label{subsec.con1}
When $N>X,$ we apply Cauchy-Schwarz inequality for the $d$-sum:
\[\cS_{N>X}\ll \ssum_{N>X}\left(\sum_{d\in\cD}\psi(d,D)^2J\left(\frac{d}{X}\right)\right)^{\frac{1}{2}}\left(\sum_{d\in\cD}J\left(\frac{d}{X}\right)\left|\sum_{n\geq1}G\left(\frac{n}{N}\right)\frac{\lambda_f(n)\chi_d(n)}{\sqrt{n}}\left(W\left(\frac{n}{d\sqrt{q}}\right)-W\left(\frac{n}{Y}\right)\right)\right|^2\right)^{\frac{1}{2}}.\]
Similar to the analysis of $S_2$-term in \cite[Section 7.5]{zhou2025momentderivativesquadratictwists}, we can show:
\[\sum_{d\in\cD}J\left(\frac{d}{X}\right)\left|\sum_{n\geq1}G\left(\frac{n}{N}\right)\frac{\lambda_f(n)\chi_d(n)}{\sqrt{n}}\left(W\left(\frac{n}{d\sqrt{q}}\right)-W\left(\frac{n}{Y}\right)\right)\right|^2\ll \left(\frac{X}{N}\right)^6X.\]
On the other hand, by the properties of $\psi(m,D)$, we apply the Mellin inverse transformation for the  compactly supported function $J(\cdot)$:
\[\sum_{d\in\cD}\psi(d,D)^2J\left(\frac{d}{X}\right)\leq\sum_{d\geq1}\psi(d,D)^2J\left(\frac{d}{X}\right)=\frac{1}{2\pi i}\int_{(2)}\widetilde{J}(s)X^{s}L_D(s)\,ds,\]
where
\[L_D(s)=\sum_{d\geq1}\frac{\psi(d,D)^2}{d^s}=\zeta(s)^2L(s,\chi_D)^2\mathcal{H}(s;D),\]
where $\mathcal{H}(s;D)$ is an Euler product absolutely convergent for $\Re(s) > \frac{1}{2}$.
We shift the contour to $\Re(s)=\frac{3}{4}.$ Noticing that there is a second order pole at $s=1$ for $L_D(s),$ we obtain:
\begin{equation}\label{eq. sec mom of psi}	
	\sum_{d\in\cD}\psi(d,D)^2J\left(\frac{d}{X}\right)\ll X\log X.
\end{equation}
This will imply, together with $\ssum_{N>X}\frac{X^3}{N^3}\ll1,$
\[S_{N>X}\ll (X\log X)^{\frac{1}{2}}X^{\frac{1}{2}}=X\log^{\frac{1}{2}} X.\]

\subsection{The contribution of $Y<N\leq X$: $\cS_{Y<N\leq X}$}\label{subsec.con2}
When $Y<N\leq X,$ we apply Cauchy-Schwarz inequality for the $d$-sum:
\[\cS_{Y<N\leq X}\ll \ssum_{Y<N\leq X}\left(\sum_{d\in\cD}\psi(d,D)^2J\left(\frac{d}{X}\right)\right)^{\frac{1}{2}}\left(\sum_{d\in\cD}J\left(\frac{d}{X}\right)\left|\sum_{n\geq1}G\left(\frac{n}{N}\right)\frac{\lambda_f(n)\chi_d(n)}{\sqrt{n}}\left(W\left(\frac{n}{d\sqrt{q}}\right)-W\left(\frac{n}{Y}\right)\right)\right|^2\right)^{\frac{1}{2}}.\]
Similar to the analysis of $S_1$-term in \cite[Lemma 7.1]{zhou2025momentderivativesquadratictwists}, we can show:
\[\sum_{d\in\cD}J\left(\frac{d}{X}\right)\left|\sum_{n\geq1}G\left(\frac{n}{N}\right)\frac{\lambda_f(n)\chi_d(n)}{\sqrt{n}}\left(W\left(\frac{n}{d\sqrt{q}}\right)-W\left(\frac{n}{Y}\right)\right)\right|^2\ll \left(\frac{X}{N}\right)^{2/\log X}X(\log\log X)^4.\]
This will imply, together with \eqref{eq. sec mom of psi} and the fact that $\ssum_{Y<N\leq X}\left(\frac{X}{N}\right)^{1/\log X}\ll \log\log X$,
\[S_{Y<N\leq X}\ll X(\log X)^{\frac{1}{2}}(\log\log X)^3.\]

\subsection{The contribution of $N\leq X$: $\cS_{N\leq Y}$}\label{subsec.con3}
The investigation of $\cS_{N\leq Y}$ is similar to that of Proposition \ref{prop.truncated sum} but more involved. 

We choose $Z=\log^{20} X.$ Similar to the study of Proposition \ref{prop.truncated sum}, we insert the M\"obius function to detect the squarefree condition. We will also have the $a\leq Z$ contribution and $a>Z$ contribution. For the $a>Z$ part, we again apply Cauchy-Schwarz inequality and we can bound this part by $O\left(\frac{X\log^4 X}{Z^{1-\varepsilon}}\right).$ For the $a\leq Z$ term, we insert the M\"obius functions to drop the copime conditions on $d,$ and apply Lemma \ref{lem. Hecke for psi}, which implies: 
\begin{equation}\label{eq. small range}
\cS_{N\leq Y}=\frac{1}{2}\sum_{q'\in\{\pm4,\pm 4q\}}\cS_{q'}(a\leq Z)+O\left(\frac{X\log^4 X}{Z^{1-\varepsilon}}\right),	
\end{equation}
where
\begin{equation}
	\begin{split}
		\cS_{q'}(a\leq Z)& = \sum_{\substack{a \leq Z \\ (a,2qD)=1}}\mu(a)\sum_{b|qD}\mu(b)\sum_{\ell|a^2b}\mu(\ell)\chi_D(\ell)\psi\left(\frac{a^2b}{\ell};D\right)\\
	&\hspace{2mm} \times \ssum_{N\leq Y}\sum_{(n,a)=1}\frac{\lambda_f(n)\chi_{nq'}(b\ell)}{\sqrt{n}}G\left(\frac{n}{N}\right)\sum_{d\geq 1}\psi(d;D)\chi_{nq'}(d)J\left(\frac{a^2bd\ell}{X}\right)\left(W\left(\frac{n}{a^2bd\ell\sqrt{q}}\right)-W\left(\frac{n}{Y}\right)\right).
	\end{split}
\end{equation}
We set:
\[\cL(q'):=\ssum_{N\leq Y}\sum_{(n,a)=1}\frac{\lambda_f(n)\chi_{nq'}(b\ell)}{\sqrt{n}}G\left(\frac{n}{N}\right)\sum_{d\geq 1}\psi(d;D)\chi_{nq'}(d)J\left(\frac{a^2bd\ell}{X}\right)\left(W\left(\frac{n}{a^2bd\ell\sqrt{q}}\right)-W\left(\frac{n}{Y}\right)\right).\]
Let $\varepsilon=\frac{\log\log X}{\log X}$ and $C_{\varepsilon}$ be the contour:
\[C_{\varepsilon}=\{it:|t|\geq\varepsilon\}\bigcup\{\varepsilon  e^{i\theta}:\theta\in[-\pi/2,\pi/2]\}.\]
We open the $W$-function by definition and then apply the Mellin inverse transformation for $J(x)$:
\begin{flalign*}
\cL(q')=\frac{1}{(2\pi i)^2}\ssum_{N\leq Y}\int_{C_{\varepsilon}}\int_{(3)}&\frac{\Gamma(1+w)}{(2\pi)^w}\frac{\widetilde{J}(s)X^s}{(a^2b\ell)^s}\sum_{(n,a)=1}\frac{\lambda_f(n)\chi_{nq'}(b\ell)}{n^{1/2+w}}G\left(\frac{n}{N}\right)\\
&\left((a^2b\ell\sqrt{q})^{w}L(s-w,\chi_{Dnq'})L(s-w,\chi_{nq'})-Y^wL(s,\chi_{Dnq'})L(s,\chi_{nq'})\right)\,ds\frac{\,dw}{w^2}.	
\end{flalign*}
We apply the change of variable $s\mapsto s+w$ for $L(s-w,\chi_{Dnq'})L(s-w,\chi_{nq'})$:
\begin{flalign*}
\cL(q')=\frac{1}{(2\pi i)^2}\ssum_{N\leq Y}\int_{C_{\varepsilon}}&\int_{(3)}\frac{\Gamma(1+w)}{(2\pi)^w}\sum_{(n,a)=1}\frac{\lambda_f(n)\chi_{nq'}(b\ell)}{n^{1/2+w}}G\left(\frac{n}{N}\right)\\
&\left(\frac{\widetilde{J}(s+w)X^{s+w}q^{w/2}}{(a^2b\ell)^{s}}L(s,\chi_{Dnq'})L(s,\chi_{nq'})-\frac{\widetilde{J}(s)X^sY^w}{(a^2b\ell)^s}L(s,\chi_{Dnq'})L(s,\chi_{nq'})\right)\,ds\frac{\,dw}{w^2}.	
\end{flalign*}
We shift the contour of $\Re(s)=3$ to $\Re(s)=\frac{1}{2}$, which implies:
\[\cL(q')=\cL_1(q')+\cL_2(q'),\] 
where
\begin{flalign*}
\cL_1(q')=\frac{1}{(2\pi i)^2}\ssum_{N\leq Y}\int_{C_{\varepsilon}}&\int_{(1/2)}\frac{\Gamma(1+w)}{(2\pi)^w}\sum_{(n,a)=1}\frac{\lambda_f(n)\chi_{nq'}(b\ell)}{n^{1/2+w}}G\left(\frac{n}{N}\right)\\
&\left(\frac{\widetilde{J}(s+w)X^{s+w}q^{w/2}}{(a^2b\ell)^{s}}L(s,\chi_{Dnq'})L(s,\chi_{nq'})-\frac{\widetilde{J}(s)X^sY^w}{(a^2b\ell)^s}L(s,\chi_{Dnq'})L(s,\chi_{nq'})\right)\,ds\frac{\,dw}{w^2}.	
\end{flalign*}
and
\begin{flalign*}
\cL_2(q')&=\frac{X}{a^2b\ell}\ssum_{N\leq Y}\frac{1}{2\pi i}\int_{C_{\varepsilon}}\frac{\Gamma(1+w)}{(2\pi)^w}(\widetilde{J}(1+w)X^wq^{w/2}-\widetilde{J}(1)Y^w)\\
&\hspace{35mm}\times\sum_{(n,a)=1}\frac{\lambda_f(n)\chi_{nq'}(b\ell)}{n^{1/2+w}}G\left(\frac{n}{N}\right)\Res_{s=1}L(s,\chi_{Dnq'})L(s,\chi_{nq'})\frac{\,dw}{w^2}.	
\end{flalign*}

For $\cL_1(q'),$ we change the order of integration. By the choice of $\varepsilon=\frac{\log\log X}{\log X},$ we obtain:
\[\cL_1(q')\ll \frac{X^{\frac{1}{2}}\log^3 X}{(a^2b\ell)^{\frac{1}{2}}}\ssum_{N\leq Y}\int_{-\infty}^{\infty}\sum_{n\ll N^{1+\varepsilon}}\frac{|\lambda_f(n)|}{n^{1/2}}G\left(\frac{n}{N}\right)|L(1/2+it,\chi_{Dnq'})||L(1/2+it,\chi_{nq'})|\,dt\]
Similar to \S \ref{subsec. error} (we notice that $G(n/N)\ll H(N)$, where $H(N)$ is defined in \eqref{eq. growth of W}), we can show that
\[\cL_1(q')\ll \frac{(XY)^{\frac{1}{2}}\log^9Y}{(a^2b\ell)^{\frac{1}{2}}}.\] 
We insert $\cL_1(q')$ and $\cL_2(q)'$ into $\cL(q')$ and then into $\cS_{q'}(a\leq Z)$:
\[\cS_{q'}(a\leq Z)= \sum_{\substack{a \leq Z \\ (a,2qD)=1}}\mu(a)\sum_{b|qD}\mu(b)\sum_{\ell|a^2b}\mu(\ell)\chi_D(\ell)\psi\left(\frac{a^2b}{\ell};D\right)\cL_2(q')+O\left(\frac{X}{\log^{10}X}\right).\]
This will imply, by \eqref{eq. small range},
\[\cS_{N\leq Y}=\frac{1}{2}\sum_{q'\in\{\pm4,\pm 4q\}}\sum_{\substack{a \leq Z \\ (a,2qD)=1}}\mu(a)\sum_{b|qD}\mu(b)\sum_{\ell|a^2b}\mu(\ell)\chi_D(\ell)\psi\left(\frac{a^2b}{\ell};D\right)\cL_2(q')+O\left(\frac{X}{\log^{10}X}\right)+O\left(\frac{X\log^4 X}{Z^{1-\varepsilon}}\right).\]
We apply the Mellin inverse transformation for $G$-function, and then follow the argument in \S \ref{subsec. main term}, which yields:

\small
\begin{flalign*}
	\cS_{N\leq Y}=&\frac{XL(1,\chi_D)\cG(q,D)}{2(2\pi i)^2}\sum_{q'\in\{4,4q\}}\ssum_{N\leq Y}\int\limits_{(3)}\int\limits_{C_{\varepsilon}}\widetilde{G}(u)N^u\left(\frac{\Gamma(1+w)}{(2\pi)^w}\right)(\widetilde{J}(1+w)X^wq^{w/2}-\widetilde{J}(1)Y^w)\cA_1(w+u;q',D) \frac{dw}{w^2}\,du\\
	&+\frac{XL(1,\chi_D)\cG(q,D)}{2(2\pi i)^2}\sum_{q'\in\{-4,-4q\}}\ssum_{N\leq Y}\int\limits_{(3)}\int\limits_{C_{\varepsilon}}\widetilde{G}(u)N^u\left(\frac{\Gamma(1+w)}{(2\pi)^w}\right)(\widetilde{J}(1+w)X^wq^{w/2}-\widetilde{J}(1)Y^w)\cA_2(w+u;q',D) \frac{dw}{w^2}\,du\\
	&+O\left(\frac{X}{\log^{10}X}\right)+O\left(\frac{X\log^4 X}{Z^{1-\varepsilon}}\right).
\end{flalign*}

\normalsize
We first move the integration line of $u$ to $\Re(u)=-\frac{1}{16}$, and then move the integration line of $w$ to $\Re(w)=-\frac{1}{16}$. By Lemma \ref{lem. euler for positive q prime} and Lemma \ref{lem. euler for negative q prime}, the integrand will only have a simple pole at $w=0.$ Moreover, we can show that
\[\Res_{w=0}\cdots=\widetilde{J}(1)(\log X-\log Y)+O(1)\ll \log\log X,\]
since $Y=\frac{X}{\log^{100} X}.$ This will imply:
\[\cS_{N\leq Y}\ll X\log\log X\ssum_{N\leq Y}N^{-1/16}+X^{15/16}\ssum_{N\leq Y}N^{-1/16}+O\left(\frac{X}{\log^{10}X}\right)+O\left(\frac{X\log^4 X}{Z^{1-\varepsilon}}\right)\ll X\log\log X.\]
We combine the results in \S \ref{subsec.con1}, \S \ref{subsec.con2} and \S \ref{subsec.con3}:
\[\sum_{d\in\cD}\psi(d,D)\cB(d;Y)J\left(\frac{d}{X}\right)\ll X(\log X)^{\frac{1}{2}}(\log\log X)^3.\]
This completes the proof of Proposition \ref{prop.difference prop}.

\section*{Acknowledgment}
We would like to express our gratitude to Yongxiao Lin, Sheng-Chi Liu, Tong Wei and Shuai Zhai for their encouragements and valuable suggestions.

\bibliographystyle{alpha}	
\bibliography{TL}

\end{document}